\newtheorem{thm}{Theorem}[section]
\newtheorem*{thm*}{Theorem}
\newtheorem{lem}[thm]{Lemma}
\newtheorem*{lem*}{Lemma}
\newtheorem{fct}[thm]{Fact}
\newtheorem*{fct*}{Fact}
\newtheorem{cor}[thm]{Corollary}
\newtheorem*{cor*}{Corollary}
\newtheorem*{prop*}{Proposition}
\theoremstyle{definition}
\newtheorem{defn}[thm]{Definition}
\newtheorem*{defn*}{Definition}
\theoremstyle{remark}
\newtheorem*{rem*}{Remark}
\newtheorem*{example*}{Example}
\newtheorem*{que*}{Question}
\newcommand{\N}{\mathbb N}
\newcommand{\abs}[1]{\left\vert#1\right\vert}
\newcommand{\set}[1]{\left\{#1\right\}}
\newcommand{\eps}{\varepsilon}
\newcommand{\CA}{\mathcal{A}}
\newcommand{\CB}{\mathcal{B}}
\newcommand{\CC}{\mathcal{C}}
\newcommand{\CM}{\mathcal{M}}
\newcommand{\CT}{\mathcal{T}}
\newcommand{\CP}{\mathcal{P}}
\newcommand{\CQ}{\mathcal{Q}}
\newcommand{\CS}{\mathcal{S}}
\renewcommand{\emptyset}{\varnothing}
\newcommand{\sd}{\bigtriangleup}
\newcommand{\Folner}{F\o lner }
\renewcommand{\epsilon}{\varepsilon}
\renewcommand{\leq}{\leqslant}
\renewcommand{\geq}{\geqslant}
\title[Residuality of dynamical morphisms]{Residuality of dynamical morphisms for amenable group actions}
\author{Dawid Huczek, Sebastian Kopacz, Jacek Serafin}
\address{\hskip- \parindent
Dawid Huczek, \emph{ Faculty of Pure and Applied Mathematics, Wroclaw University of Science and Technology, Wybrze\.ze Wyspia\'nskiego 27, 50-370 Wroc\l aw, Poland}}
\email{dawid.huczek@pwr.edu.pl}
\address{\hskip- \parindent
	Sebastian Kopacz, \emph{Faculty of Pure and Applied Mathematics, Wroclaw University of Science and Technology, Wybrze\.ze Wyspia\'nskiego 27, 50-370 Wroc\l aw, Poland}}
\email{sebastian.kopacz@pwr.edu.pl}
\address{\hskip- \parindent
	Jacek Serafin, \emph{Faculty of Pure and Applied Mathematics, Wroclaw University of Science and Technology, Wybrze\.ze Wyspia\'nskiego 27, 50-370 Wroc\l aw, Poland}}
\email{jacek.serafin@pwr.edu.pl}
\subjclass[2010]{Primary 37A15, 37A35; Secondary 28D15, 37B10, 37C85}    
\keywords{entropy, amenable group, joinings, Ornstein's theorem, symbolic dynamics}
\date{\today}
\begin{document}

\begin{abstract}
We extend the classical Baire category approach, used in proving the finite generator theorem of Krieger, the homomorphism theorem of Sinai and the isomorphism theorem of Ornstein, applying a similar reasoning to the case of actions of countably infinite amenable groups.
In principle we follow the lines of the paper by Burton, Keane and Serafin (\cite{BKS}), showing that measures defining homomorphisms or isomorphisms form residual subsets in suitably chosen spaces of joinings.

\end{abstract}
\maketitle

\section{Introduction}
The finite generator theorem of Krieger \cite{Kr}, the homomorphism theorem of Sinai \cite{Si} and the Ornstein isomorphism theorem for Bernoulli shifts \cite{O}, are  undoubtedly among the most significant results in ergodic theory. Classically, all of these results established the existence of an isomorphism or homomorphism between dynamical systems governed by the action of a single (usually invertible) measure-preserving transformation (a $\mathbb{Z}$-action), and the proofs relied on the entropy techniques of $\mathbb{Z}$-actions. 
Entropy theory has later been (and is still being) extended to actions of more general groups, with countably infinite amenable group appearing to be the most natural setting. In such generality the Kolmogorov-Sinai entropy was extended by Kieffer \cite{Ki}, the isomorphism theory of Bernoulli shifts (as well as such fundamental notions as the $\overline{d}$ metric and finite determinedness) was extended by Ornstein and Weiss \cite{OW}, and the pointwise ergodic theorem was proved by Lindenstrauss \cite{L}, just to mention some of the most important results. Since then, there have been further developments (beyond actions of amenable groups) in entropy theory, let us only note here the sofic group action results contained in the groundbreaking works by Lewis Bowen \cite{B1}, \cite{B2} and Kerr and Li \cite{KL1}.
As our article addresses actions of countable amenable groups, we will not elaborate on the details of more general group actions, instead referring the reader to a recent book of Kerr and Li \cite{KL}.
The fact that the three milestones invoked at the beginning of this section are not listed chronologically, is not accidental. In 1977 (in an unpublished note of \cite{BR}), Burton and Rothstein established a novel approach connecting the three theorems; they realized it was easier to argue that there were ``many'' isomorphisms (or homomorphisms) in a suitable Baire space of measures (where certain measures can be identified with isomorphisms) than to construct a particular isomorphism or homomorphism directly. Their method is first used to establish the finite generator theorem which plays the central role. Then the generator theorem and a clever application of the $\overline{d}$ metric and finite determinedness, are used to prove the Sinai theorem, and finally, the Ornstein theorem becomes a trivial consequence of the Sinai theorem. All of this is possible as the intersection of two residual sets is still residual, in a Baire space. Later, Rudolph presented the Burton and Rothstein method in his book \cite{R} (see also \cite{Se} for more details). In 2000 Burton, Keane and Serafin \cite{BKS}, published a more elementary version of \cite{BR}, whereas the classical treatise of Glasner \cite{G} contains all details of Rudolph’s exposition. Needless to say, all of the above works address the $\mathbb{Z}$-actions. It is also worth mentioning that a slightly different approach is presented in \cite{DS}, where the $\overline{d}$ metric and finite determinedness are replaced by more elementary tools.
In the present paper we follow similar lines to those of \cite{BKS}, although we aim to provide more detail, depth and clarity. We prove the residual  theorems of Krieger, Sinai and Ornstein, in the case of the action of a countable amenable group. Again, the finite generator theorem plays a central role, but its proof requires overcoming difficulties absent in the case of $\mathbb{Z}$-actions. In particular the notion of markers becomes much more intricate: we need to use tools such as quasitilings and dynamical tilings, originally introduced by Ornstein and Weiss, later extended by Downarowicz, Huczek and Zhang \cite{DHZ}, and then by Conley et al. \cite{CJKMST}. In addition, we need to develop marker blocks allowing us to encode a quasitiling structure using some coordinates of a symbolic element, leaving enough freedom at other coordinates to achieve other necessary properties. Other than that, we hope that our considerations remain elementary and relatively easy to follow.

\section{Preliminaries}
\subsection{Amenable groups and their actions}

\subsubsection{F\o lner sets and invariance properties}
Throughout this paper, by $G$ we will denote a countably infinite amenable group, and $(F_n)$ will denote a fixed F\o lner sequence, i.e. a sequence of finite subsets of $G$ such that for every $g\in G$ the sequence $\frac{\abs {gF_n\sd F_n}}{\abs{F_n}}$ tends to $0$ as $n$ goes to infinity. Multiplication involving sets will always be understood element-wise, so $gF_n$ is the set $\set{gf: f\in F_n}$, and $KF=\set{kf: k\in K, f\in F}$. The conclusions of our theorems do not inherently rely on the choice of the F\o lner sequence, thus we can assume without loss of generality that the F\o lner sequence we fix for $G$ is \emph{tempered}, i.e. there exists some $C>0$ such that for all $n$ we have $\abs{\bigcup_{k<n}F_kF_n}\leq C\abs{F_n}$ (we will not use this condition directly, but it is needed for the pointwise ergodic theorem and the Shannon-McMillan-Breiman theorem to hold). For convenience we can also assume that our F\o lner sets are increasing, symmetric, contain the identity element $e\in G$, and their union is all of $G$. It is well known that every countable amenable group has a F\o lner sequence satisfying all the above properties.

% If $T$ and $K$ are both finite subsets of $G$, we refer to the $K$-interior of $T$ as the set $T_K=\set{t\in T: Kt\subset T}$. For $\delta>0$ we say that $T$ is $(K,\delta)$-invariant, if $T_K$ is a $(1-\delta)$ subset of $T$. Observe that for any finite $K\subset G$ and any $\delta>0$, all but finitely many F\o lner sets $F_n$ are $(K,\delta)$-invariant. We will also make frequent use of the following easy fact:
% \begin{fct}\label{fct: invariance_of_subsets}
% If $T$ is $(K,\delta)$-invariant, then for sufficiently small $\eps$ any $(1-\eps)$subset of $T$ is also $(K,\delta)$-invariant.
% \end{fct}

\subsubsection{Actions, symbolic dynamical systems and quasitilings}
By an amenable group action we understand a pair $(X,G)$, where $X$ is a compact metric space, and $G$ acts on $X$ by homeomorphisms. If $x$ is an element of $X$ or $A$ is a subset of $X$, we will sometimes write $gx$ and $gA$ instead of $g(x)$ and $g(A)$, respectively. We say the action of $G$ on $X$ is \emph{free} if for every $x\in X$ and every $g\in G$, other than the identity element, we have $gx\neq x$; the action is \emph{ergodic} if for every $G$-invariant set $A$ we have $\mu(A)=0$ or $\mu(A)=1$. Our main concern will be group actions on \emph{symbolic spaces}: for a finite or countable set $\Lambda$, $X$ will be a subset of the space $\Lambda^G$ (with product topology), closed and invariant under the right-shift action defined by $gx(h)=x(hg)$.  For any finite $D\subset G$, a \emph{block with domain $D$} is a mapping from $D$ into $\Lambda$. We say that a block $B$ with domain $D$ occurs in $x\in X$ (at position $g$) if for every $d\in D$ we have $x(dg)=B(d)$. We will denote this concisely by $x(Dg)=B$. In addition, a \emph{cylinder} determined by $B$ is the set of all $x\in X$ such that $x(D)=B$. This correspondence between blocks and subsets of $X$ allows us to speak of measures of blocks (meaning the measures of the corresponding cylinders).
 
A \emph{quasitiling} is a family $\CT=\set{Sc: S\in\CS(\CT),c\in C(S)}$ of finite subsets of $G$ (referred to as \emph{tiles}), where $\CS(\CT)$ is a finite collection of finite subsets of $G$ (referred to as \emph{shapes}), and for each $S\in\CS(\CT)$, $C(S)$ is a subset of $G$ (not necessarily finite), whose elements are referred to as \emph{centers}. We assume that for every $T\in \CT$, the representation $T=Sc$ is unique, and that $C(S_1)$ and $C(S_2)$ are disjoint whenever $S_1\neq S_2$. Moreover we assume that each shape contains the group identity element $e$. Every quasitiling $\CT$ of $G$ with finitely many shapes can be understood as an element of a symbolic dynamical system (with the action of $G$), where the alphabet is $\Lambda = \set{\emptyset}\cup\CS(\CT)$ and $\CT$ corresponds to an element $x_\CT \in \Lambda^G$ defined by $x_\CT(g)=S$ if $Sg\in\CT$, and $x_\CT(g)=\emptyset$ otherwise. This allows us to discuss dynamical notions (such as factorization or entropy) in the context of quasitilings, by applying these notions to the orbit closures of the corresponding symbolic elements.

If $T$ is a finite subset of $G$, $T'\subset T$, and $\alpha\in(0,1)$, we say that $T'$ is an $\alpha$-subset of $T$, if $\abs{T'}>\alpha\abs{T}$.
Recall that the \emph{lower Banach density} of a set $A\subset G$ is defined as
\[
\underline{D}(A) = \lim_{n\to\infty} \inf_{g\in G} \frac{|A\cap F_ng|}{|F_n|}. 
\]
Let $\epsilon \in [0,1)$ and $\alpha \in (0,1]$. A quasitiling $\CT$ is called:
\begin{itemize}
    \item $\epsilon-disjoint$ if there exists a mapping $T \mapsto T^{\circ}$ where $T\in\CT$ such that $T^{\circ} \subseteq T$, $T^{\circ}$ is a $(1-\epsilon)$-subset of $T$ and if $T' \neq T$, then $T^{'\circ} \cap T^{\circ} = \emptyset$;
    \item \textit{disjoint} if the tiles of $\CT$ are pairwise disjoint;
    \item $\alpha-covering$ if $\underline{D}(\bigcup\CT) \geq \alpha$.
\end{itemize}

Our quasitilings will be obtained mainly from the following lemma, which is essentially a restatement of the results of \cite{CJKMST} in terms of quasitilings rather than towers:
\begin{lem}
\label{lem:dyntil}
    Let $X$ be a compact metric space with a free action of a countable amenable group $G$. For any $\eta>0$ and any $K\in\N$ there exists a collection of quasitilings $\set{\CT_x : x\in X}$ such that:
    \begin{enumerate}
        \item For every $x\in X$, $\CT_x$ is a disjoint, $(1-\eta)$-covering quasitiling of $G$.
        \item There exist F\o lner sets $F_{k_1},F_{k_2},\ldots, F_{k_n}$, such that for every $x\in X$ and every $T\in \CT_x$, $T$ is a $(1-\eta)$-subset of $F_{k_i}$ for some $i$.
        \item In the preceding property, we have $K\leq k_1\leq k_2\leq\ldots k_n$, and $n$ depends only on $\eta$ (specifically, $n$ is the smallest integer such that $(1-\eta)^n<\eta$).
        \item The mapping $x\mapsto \CT_x$ is a Borel function (from $X$ into the set of all quasitilings whose shapes are subsets of $\bigcup_{i=1}^n F_{k_i}$), such that $\CT_{gx}=\set{T{g^{-1}}: T\in \CT_x}$. In the symbolic interpretation, where we consider $\CT_x$ as an element of the full shift $\set{0,\ldots,n}^G$ (defined as $\CT_x(h)=i \iff F_{k_i}h\in \CT_x$, and $\CT_x(h)=0$ if no such $i$ exists), this means that the mapping $x\mapsto \CT_x$ is a factor map from $X$ into $\set{0,\ldots,n}^G$.
    \end{enumerate}
\end{lem}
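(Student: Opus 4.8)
The plan is to import the clopen dynamical tilings produced for free actions in \cite{CJKMST} and to repackage them, by way of the classical Ornstein--Weiss quasitiling iteration, into the form demanded here; the one genuinely new feature compared with the purely combinatorial theory (as in \cite{OW}) is that every choice has to be made continuously and equivariantly in $x$. First I would fix the parameters. As forced by item (3), let $n$ be the least integer with $(1-\eta)^n<\eta$, and fix an auxiliary disjointness parameter $\eps$ with $0<\eps\leq\eta$. Using that $(F_n)$ is a \Folner sequence, choose indices $K\leq k_1\leq k_2\leq\ldots\leq k_n$ with each $F_{k_i}$ as invariant as required; concretely, so invariant that at every stage of the iteration below a maximal $\eps$-disjoint family of translates of the shape in use covers, in lower Banach density and uniformly along each orbit, at least an $\eta$-fraction of the part of $G$ not yet covered. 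This per-stage bound is exactly the Ornstein--Weiss covering estimate, and it is what ties the number of shapes to the count in item (3).

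Next I would run the iteration equivariantly. Because the action is free, \cite{CJKMST} furnishes for each shape $F_{k_i}$ a \emph{clopen} base set whose orbit translates give a $G$-equivariant placement of pairwise $\eps$-disjoint copies of $F_{k_i}$; in the vocabulary of quasitilings this is precisely a clopen choice of a center set $C(F_{k_i})$. Processing the shapes one after another and, at each stage, adjoining a maximal such $\eps$-disjoint family inside the region of $G$ left uncovered by the earlier stages, I obtain for every $x\in X$ an $\eps$-disjoint quasitiling $\CT'_x$ whose shapes lie in $\set{F_{k_1},\ldots,F_{k_n}}$. By the per-stage estimate the lower Banach density of the uncovered set is multiplied by at most $(1-\eta)$ at each step, so after the $n$ stages it is at most $(1-\eta)^n<\eta$. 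Since every center set is clopen and the construction commutes with the shift, the map $x\mapsto\CT'_x$ is continuous and equivariant in the sense of item (4); in the symbolic picture this is exactly the required factor map, and it is here, rather than in the combinatorics, that \cite{CJKMST} is indispensable.

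It remains to pass from $\eps$-disjoint to genuinely disjoint tiles. Applying the disjointification $T\mapsto T^{\circ}$ from the definition of $\eps$-disjointness and setting $\CT_x=\set{T^{\circ}:T\in\CT'_x}$, the cores are pairwise disjoint, and since each $T$ is an exact translate of some $F_{k_i}$ while $T^{\circ}$ is a $(1-\eps)$-subset of $T$, every tile of $\CT_x$ is a $(1-\eps)$-subset, hence a $(1-\eta)$-subset, of a translate of $F_{k_i}$; this gives items (2) and (3). Disjointification discards at most an $\eps$-fraction of each tile, so the density of $\bigcup\CT_x$ stays above $1-\eta$ provided $\eps$ is chosen small enough (and the shapes correspondingly invariant enough to let the $\eps$-disjoint cover slightly overshoot $1-\eta$ with the same number $n$ of shapes); this is item (1). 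As the disjointification is performed tile-by-tile it preserves clopenness and equivariance, so $x\mapsto\CT_x$ is still a factor map.

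The main obstacle, and the reason the lemma is credited to \cite{CJKMST} rather than to the classical Ornstein--Weiss theory or to the dynamical tilings of \cite{DHZ}, is the simultaneous insistence on a continuous, equivariant (factor-map) dependence of $\CT_x$ on $x$ \emph{and} on controlling the tiles as large subsets of prescribed \Folner sets. Freeness is indispensable: it is precisely what allows the centers at every level to be read off clopen base sets, turning the whole construction into a factor map rather than a mere Borel selection. What then remains is routine constant bookkeeping --- tracking how the auxiliary $\eps$ and the degree of invariance propagate through the $n$ covering stages and the disjointification, so that the single parameter $\eta$ simultaneously governs the covering density and the subset property while the number of shapes stays equal to $n$.
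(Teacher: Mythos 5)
Your overall strategy --- run the Ornstein--Weiss covering iteration equivariantly using the marker sets of \cite{CJKMST}, then disjointify --- is essentially the paper's, and the per-stage density estimate tying the number of shapes to the least $n$ with $(1-\eta)^n<\eta$ is the right mechanism behind items (1)--(3). But the disjointification step, as written, is not an argument: the definition of $\eps$-disjointness only \emph{asserts the existence} of cores $T\mapsto T^{\circ}$, so ``applying the disjointification from the definition'' gives you no canonical choice, let alone one that is equivariant and Borel (or determined by finitely many coordinates). An arbitrary selection of cores made separately for each $x$ would destroy property (4). The paper fills exactly this hole with an explicit local rule: enumerate the elements of $F=\bigcup_i F_{k_i}$ as $f_1,\ldots,f_J$, and assign any group element lying in several tiles to the tile in which it is represented by the $f_j$ of smallest index ($\eps$-disjointness guarantees these indices differ across overlapping tiles). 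This rule commutes with the action and depends only on coordinates in $F^{-1}Fh$, so it is a block code. You need something of this kind; without it your last paragraph does not go through. Relatedly, the paper sidesteps your post-disjointification density bookkeeping: Lemma 3.4 of \cite{CJKMST} already provides pairwise disjoint cores $F_cc$ whose union has lower Banach density at least $1-\eta$, and the disjointified tiles each contain the corresponding core, so no ``overshoot'' argument or extra shrinking of $\eps$ is needed.

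A second, lesser issue is your repeated claim that the center sets are \emph{clopen} and that $x\mapsto\CT_x$ is \emph{continuous}. This overstates both what \cite{CJKMST} provides and what the lemma asserts: the marker sets $C_1,\ldots,C_n$ there are Borel (the results concern free Borel actions), and item (4) only claims a Borel equivariant map. For a general free action on a compact metric space there is no reason to expect clopen markers. This does not break the structure of the proof --- Borel is all that is needed downstream --- but the claim as written is unjustified, and the assertion that \cite{CJKMST} is ``indispensable'' for continuity attaches the citation to the wrong feature; what it actually supplies is the Borel, equivariant choice of the marker sets $C_i$ together with the disjoint high-density cores.
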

\begin{proof}

The first part of the construction of $\CT_x$ exactly mimics the proof of lemma 3.4 of \cite{CJKMST}: note that the sets $F_1,\ldots,F_n$ in \cite{CJKMST} are only required to be sufficiently invariant with respect to each other and some fixed subset of the group, and thus they can be assumed to be a subsequence of the F\o lner sequence beginning at a specified index (or later). Thus we obtain the $n$ F\o lner sets $F_{k_1},\ldots F_{k_n}$, and corresponding Borel sets $C_1,C_2,\ldots C_n\subset X$ (this is the major reason why we do not apply the lemma 3.4 directly in the present paper: the statement of lemma 3.4 already refers to the union of these sets, and we need to refer to them individually), such that $\bigcup_{i=1}^n\bigcup_{c\in C_i}F_ic$ has lower Banach density (as defined for subsets of $X$ in \cite{CJKMST}) at least $1-\eta$. 

Now, for every $x \in X$ we can define $\CT'_x=\bigcup_{i=1}^n\set{F_{k_i}h: hx\in C_i}$. Intuitively, if we interpret $C_1,C_2,\ldots,C_n$ as ``marker sets'', then the centers of the tiles of $\CT'_x$ correspond to times of visits of $x$ in these marker sets.

This gives us a collection of quasitilings which we will show to have all the desired properties, with the exception of disjointness (the actual system  of quasitilings $\CT_x$, satisfying all the required properties of the lemma, will be obtained as a factor of this quasitiling system, so all the properties we establish for $\CT'_x$ will carry over). Properties $(2)$ and $(3)$ are ensured by the choice of $F_{k_1},\ldots F_{k_n}$. Property $(4)$ follows from the following calculation: if $T\in \CT'_x$ then $T = F_{k_i}h$ for some $h$ such that $hx \in C_i$, and thus $hg^{-1}gx \in C_i$. Consequently, we have $F_{k_i}hg^{-1}\in \CT'_{gx}$, which is equivalently stated $Tg^{-1} \in \CT'_{gx}$.

Now, we must show that if we interpret $\CT'_x$ as an element of the full shift $\set{0,1,\ldots,n}^G$ (as defined in the statement of the lemma --- note that since the sets $C_1,\ldots,C_n$ are pairwise disjoint, the symbolic representation is unambiguous), then $g\CT'_x$ corresponds to $\CT'_{gx}$. Note the following: $\CT'_x(h) = i \iff F_{k_i}h \in \CT'x \iff F_{k_i}hg^{-1}\in \CT'_{gx}$. On the other hand $\CT'_x(h)=i \iff g\CT'_x(hg^{-1})=i$, which means that $g\CT'_x$ corresponds to a quasitiling which has $F_{k_i}hg^{-1}$ as a tile. This argument holds for every $g$, $h$, and $i$, and thus the quasitiling corresponding to $g\CT'_x$ is exactly $\CT'_{gx}$. In addition, since for every $i$ we have $\set{x\in X: \CT'_x(e)=i} = C_i$ (where $e$ denotes the neutral element of $G$), the mapping $x\mapsto\CT'_x$ is Borel.

%Thus $x \in hC_i$, and $gx \in ghC_i$. Therefore $(gh)^{-1}gx \in C_i$, and so $F_{k_i}gh \in \CT'_{gx}$.

% if $T\in \CT'_{gx}$ then $T$ has the form $F_{k_i}h$ such that $h^{-1}gx \in C_i$. Alternatively, we can write $T = F_{k_i}(hg^{-1})g$. Now it remains to note that $F_{k_i}(hg^{-1})$ belongs to $\CT'_x$, since we have already established that $(hg^{-1}x\in $

% \[ h^{-1}x\in C_i \iff x \in hC_i \iff gx\in ghC_i \iff (gh)^{-1}gx \in C_i\]
% \[ F_{k_i}gh \in \CT'_{gx}\]

To establish that for every $x$ the quasitiling $\CT'_x$ has lower Banach density at least $1-\eta$ we need some clarification, because \cite{CJKMST} uses a definition of LBD for subsets of the dynamical system, and the present paper uses a definition of LBD for subsets of the group itself. However, note that if we denote the union of the tiles of $\CT'_x$ as $T'_x$, then the condition $h\in T'_x$ is equivalent to saying that $h\in F_{k_i}g$ for some $i$ and some $g$ such that $gx\in C_i$. In other words $hx\in F_{k_i}gx = F_{k_i}c$ for some $c\in C_i$, so $hx \in \bigcup_c{F_cc}$ (using the notation of lemma 3.4 in \cite{CJKMST}). Thus we have $h\in T'_x$ if and only if $hx\in \bigcup_c{F_cc}$. In addition, for any finite $F\subset G$ and any $g\in G$ we have $h\in Fg$ if and only if $hx\in Fgx$, so ultimately, $\abs{T'_x\cap F} = \abs{\bigcup_c{F_cc} \cap Fgx}\geq \inf\limits_{ y\in X}{\abs{\bigcup_c{F_cc} \cap Fy}}$, which establishes that the lower Banach density of $\CT'_x$ is bounded from below by the lower Banach density of $\bigcup_c{F_cc}$ (and thus it is at least $1-\eta$).

All that remains is to modify $\CT'_x$ to ensure disjointness of the tiles. Since for every $c\in C_i$ there exists a $(1-\eta)$-subset $F_c$ of $F_{k_i}$ and the sets $F_cc$ are pairwise disjoint (over all $i$ and all $c\in C_i$), this means that we can replace all tiles of every $\CT'_x$ by their relatively large subsets (of relative size at least $1-\eta$), obtaining a disjoint quasitiling $\CT_x$. Specifically, the mapping $\CT'_x\mapsto \CT_x$ can be defined as follows:
\begin{enumerate}
    \item Enumerate all the elements of $F= \bigcup_i F_{k_i}$ as $f_1,f_2,\ldots f_J$.
    \item If $h\in G$ belongs to multiple tiles of $\CT_x$, then for each such tile $T$ it can be written as $f_{j_T}g_T$ for some $g_T\in G$ (specifically, $g_T$ is such that $g_Tx\in C_i$ for some $i$) and $j_T\leq J$. Furthermore, if $T\neq T'$, then $j_T\neq j_{T'}$, since otherwise the tiles  of $\CT_x$ would not be $\eta$-disjoint.
    \item Based on the above, assign such an $h$ to the tile $T$ for which $j_T$ is the smallest number, and remove it from the other tiles.
\end{enumerate}
Since the procedure described above commutes with the action of $G$, and for every $h$ it depends only on coordinates from $F^{-1}Fh$, it can be understood as a factor map (block code) from $\CT'_x$ to $\CT_x$, yielding the disjoint quasitiling we needed.
\end{proof}

\subsection{Partitions, blocks, and measures}\phantom{ }\\
Throughout this subsection, we will consider a symbolic dynamical system $(X,\CB,\mu, G)$ with finite alphabet $\Lambda$ and an ergodic free action of a countable amenable group $G$ by the right shift.

\begin{defn}
If $P$ and $Q$ are measurable (finite or countable) partitions of $X$, and $\eps>0$, then we write
$P \subset_{\epsilon} Q \text{ mod } \mu$, if for every $A\in P$ there exists a $B$ which is the union of elements of $Q$, and such that $\mu(A\sd B)<\eps$. Analogously, we define the inclusion $P \subset  Q \text{ mod } \mu,$
replacing the condition $\mu(A\sd B)<\eps$ with $\mu(A\sd B)=0$,
and we write $P = Q \text{ mod } \mu,$
if $P \subset  Q \text{ mod } \mu$ and $Q \subset  P \text{ mod } \mu$.
\end{defn}

The weak-$*$ topology on the set of all probability measures on $(X,\CB)$ is metrizable in a number of equivalent ways. We will explicitly use the following metric:
\[ d(\mu_1,\mu_2)=\sum_{n=1}^\infty\frac{1}{2^n}\frac{1}{\abs{\CB_n}}\sum_{B\in\CB_n}\abs{\mu_1(B)-\mu_2(B)}, \]
where $\CB_n$ is the set of all blocks over $\Lambda$ with domain $F_n$ (so $\abs{\CB_n}=\abs{\Lambda}^{\abs{F_n}}$). We will also often use the following argument for estimating proximity between two measures:
\begin{fct}
    For any $\eps>0$ there exist $\delta>0$ and $n>0$, such that if $\abs{\mu_1(B)-\mu_2(B)}<\delta$ for all $B\in \CB_n$, then $d(\mu_1,\mu_2)<\eps$.
\end{fct}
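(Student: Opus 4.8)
The plan is to split the series defining $d(\mu_1,\mu_2)$ into a finite head, consisting of the first $N$ levels, and an infinite tail. The tail can be made small using only the geometric weights $2^{-m}$, independently of the measures, while the head will be controlled by the hypothesis on level-$n$ blocks once we take $n=N$ and exploit the fact that the \Folner sets are nested.

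First I would bound the tail. For every $m$ and every pair of probability measures the inner average satisfies $\frac{1}{\abs{\CB_m}}\sum_{B\in\CB_m}\abs{\mu_1(B)-\mu_2(B)}\leq 1$, since each summand lies in $[0,1]$ and there are $\abs{\CB_m}$ of them. Hence $\sum_{m>N}\frac{1}{2^m}\frac{1}{\abs{\CB_m}}\sum_{B\in\CB_m}\abs{\mu_1(B)-\mu_2(B)}\leq \sum_{m>N}2^{-m}=2^{-N}$, so choosing $N$ with $2^{-N}<\eps/2$ makes the tail smaller than $\eps/2$ for \emph{any} $\mu_1,\mu_2$.

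Next I would set $n=N$. The key observation is that because the \Folner sets are increasing, $F_1\subseteq F_2\subseteq\cdots$, for every $m\leq N$ each block $B\in\CB_m$ is, as a cylinder, the disjoint union of those blocks $B'\in\CB_N$ whose restriction to $F_m$ equals $B$; consequently $\mu_i(B)=\sum_{B'}\mu_i(B')$, where the sum ranges over such extensions. By the triangle inequality $\abs{\mu_1(B)-\mu_2(B)}\leq\sum_{B'}\abs{\mu_1(B')-\mu_2(B')}$, and summing over all $B\in\CB_m$—noting that each $B'\in\CB_N$ restricts to exactly one $B\in\CB_m$—gives $\sum_{B\in\CB_m}\abs{\mu_1(B)-\mu_2(B)}\leq\sum_{B'\in\CB_N}\abs{\mu_1(B')-\mu_2(B')}$. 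Under the hypothesis $\abs{\mu_1(B')-\mu_2(B')}<\delta$ for all $B'\in\CB_N$, this last sum is at most $\abs{\CB_N}\delta$.

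Finally I would assemble the head: using $\abs{\CB_m}\geq 1$ and the previous estimate, the first $N$ terms sum to at most $\sum_{m=1}^N 2^{-m}\frac{\abs{\CB_N}\delta}{\abs{\CB_m}}\leq\abs{\CB_N}\delta$. Taking $\delta=\eps/(2\abs{\CB_N})$ forces the head below $\eps/2$, and combined with the tail bound this yields $d(\mu_1,\mu_2)<\eps$. The argument is essentially a routine truncation estimate; the only point requiring care is the reduction of lower-level blocks to level-$N$ blocks, which is precisely where the assumption that the \Folner sequence is increasing is used (this is what guarantees that controlling the coordinates in $F_N$ automatically controls everything in every $F_m$ with $m\leq N$).
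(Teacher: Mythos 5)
Your proof is correct. The paper states this as a \emph{Fact} without proof, treating it as a routine observation, and your truncation argument (bounding the tail of the series by $2^{-N}$ uniformly in the measures, then reducing blocks over $F_m$ with $m\leq N$ to blocks over $F_N$ via the nesting $F_m\subseteq F_N$ and the disjoint-refinement identity $\mu_i(B)=\sum_{B'}\mu_i(B')$) is exactly the standard argument being implicitly invoked; your choice $\delta=\eps/(2\abs{\CB_N})$ closes the estimate.
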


Given a pair of blocks $B'$ and $B$ (with domains $D'$ and $D$ respectively), the frequency of $B'$ in $B$ is defined as follows:
\[
    \text{fr}_{B}(B') = \frac{1}{|D|}|\{h\in D:D'h\subset D,\ B(gh)=B'(g) \text{ for all }g\in D'\}|.
\]
% Let $\delta_B(x)=1$ if the block $B$ occurs in $x$ and the central position (i.e. $x(D)=B$), and $0$ otherwise. Then if $B$ occurs in $x$ we have:
% \[
%     \text{fr}_x(B) = \frac{1}{|D'|}\sum_{g\in D'} \delta_b (gx).
% \]
The metric defined above on the set of measures can be extended to blocks as follows:
\[ d(B_1,B_2)=\sum_{n=1}^\infty\frac{1}{2^n}\frac{1}{\abs{\CB_n}}\sum_{B\in\CB_n}\abs{\text{fr}_{B_1}(B)-\text{fr}_{B_2}(B)}, \]
\[ d(\mu,C)=\sum_{n=1}^\infty\frac{1}{2^n}\frac{1}{\abs{\CB_n}}\sum_{B\in\CB_n}\abs{\mu(B)-\text{fr}_{C}(B)}. \]

Recall the pointwise ergodic theorem for countable (discrete) groups (see e.g. \cite{L}). 
\begin{thm}
\label{lem:PET for discreate group}
    Let $G$ be a countable amenable group acting on a measure space $(X, \mathcal{B},\mu)$ by measure preserving transformations, and let $(F_n)$ be a tempered \Folner sequence. Then for any $f\in L^1(\mu)$, there exists a $G$-invariant $\overline{f}\in L^1(\mu)$ such that
    \[
        \lim_{n\to\infty} \frac{1}{|F_n|}\sum_{g\in F_n} f(gx) = \overline{f}(x) \text{ a.e } .
    \]
    In particular, if the $G$ action is ergodic,
    \[
        \lim_{n\to\infty} \frac{1}{|F_n|}\sum_{g\in F_n} f(gx) = \int f(x) d\mu(x) \text{ a.e } .
    \]
\end{thm}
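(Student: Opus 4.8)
The plan is to deduce the theorem from three standard ingredients: a mean ergodic theorem that identifies the limit, a weak-type maximal inequality that is the analytic heart of the matter, and a Banach-principle transference that upgrades a.e. convergence on a dense class to all of $L^1(\mu)$. Throughout write $A_nf(x)=\frac{1}{\abs{F_n}}\sum_{g\in F_n}f(gx)$. First I would establish mean convergence and identify $\overline f$. For $f\in L^2(\mu)$ the \Folner property gives $\norm{A_nf-A_n U_hf}_2\to0$ for every $h\in G$ (because $\abs{hF_n\sd F_n}/\abs{F_n}\to0$), and a routine Hilbert-space argument (the von Neumann mean ergodic theorem for amenable groups) yields $A_nf\to Pf$ in $L^2$, where $P$ is the orthogonal projection onto the $G$-invariant functions; in particular $Pf$ is $G$-invariant. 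Since $\norm{A_nf}_1\le\norm f_1$ by measure-preservation and $L^2(\mu)\cap L^1(\mu)$ is dense in $L^1(\mu)$, this extends to $L^1$-mean convergence $A_nf\to\overline f$ with $\overline f=\mathbb E(f\mid\mathcal I)$ the conditional expectation onto the invariant $\sigma$-algebra $\mathcal I$; thus $\overline f$ is $G$-invariant and lies in $L^1(\mu)$. Under ergodicity $\mathcal I$ is trivial, so $\overline f$ equals the constant $\int f\,d\mu$, which gives the second displayed formula.

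The crux is the maximal inequality: there is a constant $C>0$, depending only on the temperedness constant of $(F_n)$, such that the maximal function $f^*=\sup_n A_n\abs f$ satisfies $\mu(\set{f^*>\lambda})\le\frac{C}{\lambda}\norm f_1$ for all $\lambda>0$ and $f\in L^1(\mu)$. I expect this to be the main obstacle, and I would follow Lindenstrauss \cite{L}: by transference it suffices to prove a purely combinatorial covering statement for the group, namely that when each point of a finite subset of $G$ is assigned one of the \Folner sets $F_n$, a suitable random procedure selects, with positive probability, a pairwise disjoint subfamily of the resulting sets that still covers a fixed positive proportion of their union. Here temperedness, in the form $\abs{\bigcup_{k<n}F_kF_n}\le C\abs{F_n}$, is precisely what bounds the expected overlap when the sets are examined in decreasing order of scale and those meeting previously chosen sets are discarded; this probabilistic (rather than deterministic Vitali) covering is the delicate point, since the naive greedy selection fails for general amenable groups.

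Finally I would invoke the Banach principle. Put $\Omega f=\limsup_n A_nf-\liminf_n A_nf$, so that $0\le\Omega f\le2f^*$; the maximal inequality then shows that $\set{f\in L^1(\mu):\Omega f=0\text{ a.e.}}$ is closed in $L^1(\mu)$. This set contains every $G$-invariant $f$ (for which $A_nf=f$) and every bounded coboundary $x\mapsto\varphi(x)-\varphi(hx)$ with $\varphi\in L^\infty(\mu)$ and $h\in G$, since a change of variables gives $A_n\big(\varphi(\cdot)-\varphi(h\,\cdot)\big)(x)=\frac{1}{\abs{F_n}}\big(\sum_{g\in F_n}\varphi(gx)-\sum_{g\in hF_n}\varphi(gx)\big)$, which is at most $\norm\varphi_\infty\,\abs{hF_n\sd F_n}/\abs{F_n}\to0$ by the \Folner property. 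By the $L^2$ mean ergodic decomposition the linear span of the invariant functions and these coboundaries is dense in $L^2(\mu)$, hence in $L^1(\mu)$, so $\Omega f=0$ a.e. for every $f\in L^1(\mu)$; that is, $A_nf$ converges a.e. Fatou's lemma together with the $L^1$-convergence of the first step identifies the pointwise limit with $\overline f$, completing the proof.
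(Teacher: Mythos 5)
The paper does not prove this statement at all: it is quoted as a known result with a pointer to Lindenstrauss \cite{L}, so there is no internal proof to compare against. Your proposal is a correct reconstruction of the standard argument, and it is in fact the architecture of Lindenstrauss's own proof: (i) the von Neumann mean ergodic theorem for amenable groups identifies the candidate limit $\overline f=\mathbb E(f\mid\mathcal I)$ and gives $L^1$-mean convergence by density of $L^2\cap L^1$ and the contraction property of the averages; (ii) a weak-type $(1,1)$ maximal inequality with constant depending only on the temperedness constant; (iii) the Banach principle applied to the dense subspace spanned by invariant functions and bounded coboundaries, for which a.e.\ (indeed everywhere) convergence follows directly from the \Folner condition $\abs{hF_n\sd F_n}/\abs{F_n}\to 0$. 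Your bookkeeping in step (iii) is right for the paper's right-shift convention, since $(hg)x=h(gx)$ turns the coboundary sum into a difference over $F_n$ and $hF_n$. The one caveat is that step (ii) is where essentially all the difficulty lives: you correctly name the probabilistic covering lemma and the role of $\abs{\bigcup_{k<n}F_kF_n}\leq C\abs{F_n}$ in controlling expected overlaps, but you describe rather than prove it, so as written your argument is an outline that defers the hard analytic core to \cite{L}. Given that the paper itself simply cites the theorem, this is an appropriate level of detail; a self-contained proof would require carrying out the random selection and transference arguments in full.
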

As a consequence, if $B$ is a block in $X$ and we set $\delta_B$ to be the indicator function of the corresponding cylinder, we get the following:
\[
    \lim_{n\to\infty} \frac{1}{|F_n|}\sum_{g\in F_n} \delta_B(gx) = \int \delta_B(x) d\mu(x) = \mu(B) \text{ a.e } .
\]
Equivalently, for $\mu$-almost every $X$, we have $\lim_{n\to\infty}\text{fr}_{x(F_n)}(B)=\mu(B)$, which can be also stated as $\lim_{n\to\infty}d( \mu, x(F_n))=0$, and thus we can approximate the measure of any cylinder by the frequency with which the corresponding block occurs in $x(F_n)$ for large enough $n$. In addition, since pointwise convergence in the ergodic theorem implies convergence in measure, we have the following:
\begin{cor}
If $\mu$ is ergodic, then for every $\delta_1,\delta_2>0$ there exists an $N$ such that for $n>N$ the collection of blocks $B$ with domain $F_n$ such that $d(\mu,B)<\delta_1$ has joint measure greater than $1-\delta_2$.
\end{cor}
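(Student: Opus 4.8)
The plan is to read the statement as a convergence-in-measure assertion about the functions $f_n(x) = d(\mu, x(F_n))$ and then translate it back into the language of blocks. First I would observe that $f_n$ depends on $x$ only through the block $x(F_n)$: since $\Lambda$ and $F_n$ are finite, there are only finitely many blocks $B$ with domain $F_n$, and $f_n$ is constant on each cylinder $[B]$, taking there the value $d(\mu, B)$. Consequently each $f_n$ is a simple (hence measurable) function, and for a fixed threshold $\delta_1$ the superlevel set
\[
A_n = \set{x\in X : f_n(x) \geq \delta_1}
\]
is exactly the union of those cylinders $[B]$ for which $d(\mu, B) \geq \delta_1$. Its complement $A_n^c$ is the union of the cylinders with $d(\mu, B) < \delta_1$, and since distinct blocks with the common domain $F_n$ correspond to disjoint cylinders, $\mu(A_n^c)$ is precisely the joint measure of the ``good'' blocks named in the statement.

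Next I would invoke the convergence already recorded just above the corollary. For each fixed block $B$ the pointwise ergodic theorem gives $\mathrm{fr}_{x(F_n)}(B)\to\mu(B)$ on a set of full measure; intersecting over the countably many blocks $B$ (across all domains $F_m$) yields a single full-measure set on which this holds simultaneously, and summing against the summable weights $\tfrac{1}{2^m}\tfrac{1}{\abs{\CB_m}}$ defining the metric $d$ (dominated convergence for series, the summands being uniformly bounded) gives $f_n(x)\to 0$ for $\mu$-almost every $x$. This is exactly the statement $\lim_n d(\mu, x(F_n)) = 0$ a.e.\ asserted in the preceding paragraph, which I would take as established.

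Finally, since $\mu$ is a probability measure, almost-everywhere convergence implies convergence in measure; this is the single standard fact I would cite (via Egorov's theorem, say). Hence for every $\delta_1 > 0$ we have $\mu(A_n)\to 0$ as $n\to\infty$, so given also $\delta_2 > 0$ there is an $N$ with $\mu(A_n) < \delta_2$ for all $n > N$. By the identification in the first step, for such $n$ the joint measure of the blocks $B$ with domain $F_n$ satisfying $d(\mu, B) < \delta_1$ equals $\mu(A_n^c) = 1 - \mu(A_n) > 1 - \delta_2$, which is the claim.

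I do not expect a genuine obstacle here: the substance is entirely the passage from pointwise to in-measure convergence, which is automatic on a finite measure space. The only points demanding a moment's care are the bookkeeping in the first step --- that $\set{x : d(\mu, x(F_n)) < \delta_1}$ is a finite union of cylinders whose measure is the joint measure named in the statement --- and the remark, already supplied by the text, that the a.e.\ limits for individual blocks assemble into the a.e.\ convergence $d(\mu, x(F_n))\to 0$ of the full weighted series.
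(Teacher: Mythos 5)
Your proposal is correct and follows essentially the same route as the paper, which derives the corollary in one line from the fact that the a.e.\ convergence $d(\mu,x(F_n))\to 0$ (established just before from the pointwise ergodic theorem) implies convergence in measure. You merely make explicit the bookkeeping the paper leaves implicit, namely that the superlevel set of $x\mapsto d(\mu,x(F_n))$ is a union of cylinders whose complementary measure is the joint measure of the good blocks.
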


A straightforward estimate will allow us to replace $F_n$ in the above corollary by a sufficiently large subset:
\begin{lem}
\label{lem: PET for subsets}
    Suppose for some $\delta>0$ and finite $F\subset G$ we have
    \[
         \mu(B)- \delta \leq \frac{1}{|F|}\sum_{g\in F} \delta_B(gx) \leq \mu(B) + \delta.
    \]
    If $\eta < \frac{\delta}{1+2\delta}$ and $F'$ is a $(1-\eta)$-subset of $F$ then
    \[
        \mu(B) - 2\delta \leq \frac{1}{|F'|}\sum_{g\in F'} \delta_B(gx) \leq \mu(B) + 2\delta.
    \]
\end{lem}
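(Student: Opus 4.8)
The plan is to exploit that $\delta_B$ takes only the values $0$ and $1$, so that passing from a sum over $F$ to a sum over the large subset $F'$ can change neither the numerator nor the denominator by much. Write $S_F=\sum_{g\in F}\delta_B(gx)$ and $S_{F'}=\sum_{g\in F'}\delta_B(gx)$, and record the two elementary facts I will use repeatedly: since every summand lies in $\{0,1\}$, deleting the indices in $F\setminus F'$ decreases the sum by at most $|F\setminus F'|=|F|-|F'|<\eta|F|$; and since $F'$ is a $(1-\eta)$-subset, $|F'|>(1-\eta)|F|$, hence $\frac{|F|}{|F'|}<\frac{1}{1-\eta}$. The hypothesis reads $\mu(B)-\delta\le\frac{S_F}{|F|}\le\mu(B)+\delta$.

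For the upper bound I would simply discard the deleted terms: $S_{F'}\le S_F$, so
\[
\frac{S_{F'}}{|F'|}\le\frac{S_F}{|F'|}\le(\mu(B)+\delta)\frac{|F|}{|F'|}<\frac{\mu(B)+\delta}{1-\eta}.
\]
It then remains to check $\frac{\mu(B)+\delta}{1-\eta}\le\mu(B)+2\delta$, which rearranges to $\eta(\mu(B)+2\delta)\le\delta$; since $\mu(B)\le 1$ it suffices that $\eta(1+2\delta)\le\delta$, i.e.\ $\eta\le\frac{\delta}{1+2\delta}$. This is exactly the standing hypothesis, and it is precisely here (through the crude estimate $\mu(B)\le 1$) that the threshold $\frac{\delta}{1+2\delta}$ is forced.

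For the lower bound I would instead keep track of the deleted mass: $S_{F'}\ge S_F-|F\setminus F'|$, so that
\[
\frac{S_{F'}}{|F'|}\ge\frac{S_F}{|F'|}-\frac{|F\setminus F'|}{|F'|}.
\]
When $\mu(B)-\delta\ge 0$ we have $S_F\ge(\mu(B)-\delta)|F|\ge(\mu(B)-\delta)|F'|$, giving $\frac{S_F}{|F'|}\ge\mu(B)-\delta$, while $\frac{|F\setminus F'|}{|F'|}<\frac{\eta}{1-\eta}\le\delta$ because $\eta\le\frac{\delta}{1+2\delta}<\frac{\delta}{1+\delta}$; combining these yields $\frac{S_{F'}}{|F'|}>\mu(B)-2\delta$. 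If instead $\mu(B)-\delta<0$ the lower bound is automatic, since $\frac{S_{F'}}{|F'|}\ge 0>\mu(B)-\delta>\mu(B)-2\delta$.

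In short, the statement is a routine two-sided estimate and I do not anticipate any genuine obstacle; the only points demanding a little care are the correct handling of the $\frac{1}{1-\eta}$ factors and the observation that the lower bound in fact holds under the weaker condition $\eta\le\frac{\delta}{1+\delta}$, so that the quantitative threshold appearing in the hypothesis is dictated entirely by the upper-bound computation together with the estimate $\mu(B)\le 1$.
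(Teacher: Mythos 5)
Your proof is correct and follows essentially the same route as the paper: the upper bound via $\frac{1}{|F'|}\le\frac{1}{(1-\eta)|F|}$ together with $\eta(1+2\delta)\le\delta$ is identical, and the lower bound differs only in bookkeeping (the paper normalizes the truncated sum by $|F|$ rather than $|F'|$, which avoids your case split on the sign of $\mu(B)-\delta$ but rests on the same observation that at most $\eta|F|$ terms, each bounded by $1$, are discarded). No gaps.
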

\begin{proof}
    From one side, we have
    \[\begin{split}
        \frac{1}{|F'|}\sum_{g\in F'}\delta_B(gx) &\geq \frac{1}{|F|} \Big(\sum_{g\in F} \delta_B(gx) - \sum_{g\in F\setminus F'} \delta_B(gx) \Big)\geq \\ &\geq \frac{1}{|F|} \Big(\sum_{g\in F} \delta_B(gx) - |F\setminus F'| \Big) \geq \mu(B) - \delta - \eta \geq \mu(B) - 2\delta.
      \end{split}  
    \]
    From the other side, we have
    \[
        \frac{1}{|F'|}\sum_{g\in F'} \delta_B(gx) \leq \frac{1}{(1-\eta)|F|} \sum_{g\in F} \delta_B(gx) \leq \frac{1}{1-\eta}(\mu(B) + \delta) \leq \mu(B) + 2\delta.
    \]
\end{proof}

The above results enable a very useful construction: given an ergodic measure $\mu$, we can take a disjoint quasitiling $\CT$ of $G$ (whose tiles are relatively large subsets of large F\o lner sets), and construct a symbolic element $x_\mu$ by assigning to each tile of $\CT$ a block which is sufficiently close to $\mu$ (the existence of such blocks follows from the ergodic theorem). Every invariant measure supported by the orbit closure of such an $x_\mu$ will also be close to $\mu$ (and can be guaranteed to be arbitrarily close to $\mu$, provided that the blocks used to construct $x_\mu$ are sufficiently close to $\mu$, and the tiles of $\CT$ are sufficiently large subsets of sufficiently late F\o lner sets).
\begin{cor}\label{cor:measure_approximation}
    If $\mu$ is a measure on $(X,\CB)$ and $\eps>0$, then there exist $\delta$ and $n_0$, such that if all the following are satisfied:
    \begin{enumerate}
        \item $\CT$ is a disjoint, $(1-\delta)$-covering quasitiling of $G$,
        \item Every shape of $\CT$ is a $(1-\delta)$-subset of some F\o lner set $F_n$, where $n$ may vary but is always larger than $n_0$,
        \item $x\in X$ is such that for all $T\in\CT$ we have $d(\mu, x(T))<\delta$, 
    \end{enumerate}
    then for any measure $\mu'$ supported by the orbit closure of $x$ we have $d(\mu, \mu')<\eps$.
\end{cor}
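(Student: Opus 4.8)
The plan is to reduce the whole statement to one uniform combinatorial estimate on large \Folner windows. First, by the Fact recorded after the definition of the metric $d$, I fix $\eps>0$ and choose $\delta_1>0$ together with an index $N$ so that $\abs{\mu(B)-\mu'(B)}<\delta_1$ for all $B\in\CB_N$ already forces $d(\mu,\mu')<\eps$. Hence it suffices to produce $\delta$ and $n_0$, depending only on $\mu$ and $\eps$ (and therefore on $N$), guaranteeing $\abs{\mu(B)-\mu'(B)}<\delta_1$ for every block $B$ with domain $F_N$. The point of this first step is that $N$, and thus the constant $\abs{F_N}$, is pinned down \emph{before} $\delta$ is chosen.

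Second, I would rewrite $\mu'(B)$ through frequencies of $B$ in finite windows of $x$. For $B\in\CB_N$ and any $m$, invariance of $\mu'$ gives $\int \delta_B(gy)\,d\mu'(y)=\mu'(B)$ for each $g$, so averaging over $g\in F_m$ yields $\mu'(B)=\int \frac{1}{\abs{F_m}}\sum_{g\in F_m}\delta_B(gy)\,d\mu'(y)$. The integrand differs from $\text{fr}_{y(F_m)}(B)$ only by the positions $g$ with $F_Ng\not\subseteq F_m$, whose proportion is a \Folner error $\beta_m\to 0$. Moreover any $y$ in the orbit closure of $x$ agrees with a shift of $x$ on $F_m$, so $\text{fr}_{y(F_m)}(B)=\text{fr}_{x(F_mg)}(B)$ for a suitable $g=g(y)$. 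This gives $\abs{\mu'(B)-\mu(B)}\leq \beta_m+\sup_{g\in G}\abs{\text{fr}_{x(F_mg)}(B)-\mu(B)}$, so the task becomes to bound the supremum uniformly in $g$ for large $m$ and then let $m\to\infty$.

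The combinatorial heart is this uniform frequency estimate. Writing $W=F_mg$, I would split the occurrences of $B$ in $x(W)$ into a \emph{clean} part, from positions $h$ with $F_Nh$ contained in a single tile $T\in\CT$ satisfying $T\subseteq W$, and a \emph{bad} part. The clean part equals $\sum_{T\subseteq W}\abs{T}\,\text{fr}_{x(T)}(B)$; since $d(\mu,x(T))<\delta$ dominates the $N$-th term of the metric, each $\text{fr}_{x(T)}(B)$ lies within $2^{N}\abs{\CB_N}\,\delta$ of $\mu(B)$, so the clean contribution is $\mu(B)$ up to $O(\delta)$ times the covered volume. The bad positions are those whose translate $F_Nh$ meets the uncovered set, straddles a tile boundary, or lies in a tile meeting $\partial W$. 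I would bound their proportion using the uncovered density (at most $\delta+o_m(1)$, uniformly in $g$, by the $(1-\delta)$-covering assumption), the inner $F_N$-boundaries of the finitely many shapes (each a $(1-\delta)$-subset of some $F_n$ with $n>n_0$, hence of vanishing relative size as $n_0\to\infty$ by the \Folner property), and the inner boundary of $W$ relative to the shapes (vanishing as $m\to\infty$).

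The one delicate point is that several of these boundary counts carry a factor $\abs{F_N}$ — for instance the set of $h$ whose translate $F_Nh$ meets the uncovered region is contained in $F_N^{-1}(W\setminus\bigcup\CT)$. This is precisely where the ordering of constants matters: because $N$ is fixed from $\eps$ first, $\abs{F_N}$ is a constant, and one simply takes $\delta$ small enough that $\abs{F_N}\delta$ and $2^N\abs{\CB_N}\delta$ are tiny and $n_0$ large enough that the shape-boundary error is tiny, while the window-boundary error disappears as $m\to\infty$. Collecting the estimates yields $\sup_g\abs{\text{fr}_{x(F_mg)}(B)-\mu(B)}\leq\delta_1$ in the limit, hence $\abs{\mu'(B)-\mu(B)}\leq\delta_1$ for all $B\in\CB_N$, and therefore $d(\mu,\mu')<\eps$. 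I expect the main obstacle to be exactly the bad-position bookkeeping: arranging all error terms to be controlled uniformly in $g$ and to collapse under the successive choices of $\delta$ small, $n_0$ large, and $m\to\infty$.
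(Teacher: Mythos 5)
Your argument is correct, and it supplies essentially the proof the paper leaves implicit: the corollary is stated there without proof, as a consequence of the preceding frequency/ergodic-theorem discussion, and your route (reduce to finitely many blocks $B\in\CB_N$ via the Fact, express $\mu'(B)$ through frequencies in large windows $F_mg$ of $x$, then split occurrences into tile-interior ones controlled by $d(\mu,x(T))<\delta$ and boundary/uncovered ones controlled by the covering density and \Folner estimates) is exactly the intended one. You also correctly isolate the only delicate point, namely that $N$ and hence the constants $\abs{F_N}$ and $2^N\abs{\CB_N}$ are fixed before $\delta$ and $n_0$ are chosen, which is what makes all the $O(\delta)$ error terms harmless.
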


\subsection{Entropy}
We will assume the reader is familiar with most definitions and properties related to measure-theoretic entropy, but we note that we use logarithms with base 2 in our calculations. We will just recall the basic formulae for the entropy  $H(P)$ of a partition and the entropy $h(P)$ of the action with respect to the partition as follows: If $P=\set{P_1,P_2,\ldots,P_m}$ is a partition of $X$ into $m$ measurable sets, then for any $g\in G$ we define $gP =\set{gP_1,gP_2,\ldots, gP_m}$, and we define
\[ H(P) = -\sum_{i=1}^m\mu(P_i)\log\mu(P_i),\quad H_n(P)= H\left(\bigvee_{g\in F_n} gP\right),\text{ and }\, h(P) = \lim_{n\to\infty} \frac{1}{\abs{F_n}}H_n(P).\]

\begin{thm}[The Shannon-McMillan-Breiman Theorem]\label{lem:SMB}
    Let $P$ be a finite partition, and assume that $G$ is a discrete amenable group acting ergodically on a measure space $(X, \CB, \mu)$. Assume $F_n$ is a tempered sequence of F\o lner sets, with $\frac{|F_n|}{\log n} \to \infty$. Then, if $P^{F_n}(x)$ denotes the element of $\bigvee_{g\in F_n}gP$ to which $x$ belongs, we have
    \[
        \frac{-\log(\mu(P^{F_n}(x))}{|F_n|} \to h(P) \text{ as } n\to\infty
    \] 
    a.e. and in $L^1(\mu)$.
\end{thm}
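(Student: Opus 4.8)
The plan is to follow Breiman's strategy from the $\mathbb{Z}$-case, replacing the role of the ``past'' (and the martingale attached to it) by an \emph{ordered dynamical tiling} supplied by Lemma~\ref{lem:dyntil}. Write $I_Q(x)=-\log\mu(Q(x))$ for the information function of a partition $Q$, set $f_n(x)=I_{P^{F_n}}(x)=-\log\mu(P^{F_n}(x))$ and $g_n=f_n/|F_n|$, so the target is $g_n\to h(P)$ a.e.\ and in $L^1$. The mean statement is essentially free: $\int f_n\,d\mu=H(P^{F_n})=H_n(P)$, so $\int g_n\,d\mu\to h(P)$ by the very definition of $h(P)$. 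Moreover, up to an error controlled by the Følner condition $|F_n\gamma\sd F_n|/|F_n|\to0$, the function $g_n$ is $G$-invariant, whence $\limsup_n g_n$ and $\liminf_n g_n$ are invariant and a.e.\ constant by ergodicity; the whole difficulty is to show both constants equal $h(P)$. I emphasize at the outset that mean convergence together with a \emph{one-sided} pointwise bound does not suffice (a sliding-window example shows $\limsup g_n=h$, mean $\to h$ and uniform integrability are all compatible with $\liminf g_n<h$), so the argument must deliver a genuinely two-sided estimate.

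The key device is the following. Fix small $\eta>0$ and large $K$, and take the tiling $\CT_x$ of Lemma~\ref{lem:dyntil}: its tiles are $(1-\eta)$-subsets of late Følner sets $F_{k_1},\dots,F_{k_n}$ with $k_i\geq K$, they are disjoint and $(1-\eta)$-covering, and their centers are read off the marker sets, a shape-$S$ tile being centered at $c$ iff $cx\in C_S$. Impose a fixed linear order on the finitely many shapes and, inside each $F_m$, on the tiles $T_1,\dots,T_r$ contained in $F_m$; let $E_m=F_m\setminus\bigcup_j T_j$ be the uncovered/boundary remainder, of density $\le\eta+o(1)$. The \emph{exact} chain rule for information then gives
\[
 f_m(x)=\sum_{j}I_{P^{T_j}\mid\mathcal A_{<j}}(x)+I_{P^{E_m}\mid\mathcal A}(x),\qquad \mathcal A_{<j}=\sigma\bigl(P^{T_1\cup\cdots\cup T_{j-1}}\bigr),\ \ \mathcal A=\sigma\bigl(P^{\bigcup_j T_j}\bigr),
\]
in which no term is merely bounded: each is an honest conditional information, and this is precisely what will yield both bounds simultaneously. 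The remainder contributes at most $H(P^{E_m})\le|E_m|\log|\Lambda|=O((\eta+o(1))|F_m|)$ on average and is controlled pointwise by the same information estimates used below.

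Two convergences must now be combined. First, as $K\to\infty$ and the tiles grow and become more invariant, the per-tile conditional entropy $H(P^{T_j}\mid\mathcal A_{<j})/|T_j|$ tends to $h(P)$ (Ornstein--Weiss), and a Doob/martingale-type argument shows the corresponding conditional information functions concentrate near $|T_j|h(P)$; a Chung--Neveu tail bound of the form $\mu(\{I_{Q\mid\mathcal A}>\lambda\})\le|Q|\,2^{-\lambda}$ furnishes an integrable majorant, hence uniform integrability. Second, using the marker-set description one rewrites the tile-sum as a genuine Følner average $\frac{1}{|F_m|}\sum_{g\in F_m}\psi(gx)$ of the single function $\psi(y)=\sum_S\chi_{C_S}(y)\,\varphi_S(y)$, where $\varphi_S$ is the information attached to a shape-$S$ tile, and applies the pointwise ergodic theorem (Theorem~\ref{lem:PET for discreate group}, valid since $(F_n)$ is tempered). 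Taking a sequence $\eta_\ell\to0$, $K_\ell\to\infty$ and intersecting the resulting full-measure sets squeezes both $\limsup g_n$ and $\liminf g_n$ to $h(P)$.

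Finally, the growth condition $|F_n|/\log n\to\infty$ enters a Borel--Cantelli step controlling, uniformly along the whole sequence, the discrepancy between $g_n$ and its tiling approximation, so that these errors vanish a.e.\ simultaneously and one obtains a.e.\ convergence for the full sequence $(g_n)$; the uniform integrability from the information tail bound then promotes this to $L^1$ convergence. I expect the main obstacle to lie in the conditional-information bookkeeping of the second and third steps: unlike the $\mathbb{Z}$-case there is no canonical increasing past, so one must verify that the conditioning $\mathcal A_{<j}$ attached to the imposed tile-order stabilizes enough for the martingale-type concentration to hold uniformly over tiles, while simultaneously keeping the overlaps and the remainder region $E_m$ controlled by $\eta$.
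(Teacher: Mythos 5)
The paper does not prove this statement at all: it is quoted as a known result (the $L^1$ convergence goes back to Kieffer and Ornstein--Weiss, and the pointwise convergence is Lindenstrauss's theorem), so there is no ``paper proof'' to compare against. Your outline resembles the general shape of the known arguments (tilings, an exact chain rule for information, an ergodic-theorem step, and a Borel--Cantelli step using $|F_n|/\log n\to\infty$), and your opening observation that the mean statement is easy while a one-sided bound does not suffice is correct. But as a proof it has genuine gaps, located exactly where you yourself flag the ``main obstacle.''

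The decisive gap is the ergodic-theorem step. You rewrite the tile-sum as $\frac{1}{|F_m|}\sum_{g\in F_m}\psi(gx)$ with $\psi=\sum_S\chi_{C_S}\varphi_S$, but the summands in your chain rule are $I_{P^{T_j}\mid\mathcal A_{<j}}$, and the conditioning $\mathcal A_{<j}$ depends on which other tiles of $F_m$ precede $T_j$ in your imposed order --- hence on $m$ and on the global position of the tile inside $F_m$, not only on the shape $S$ and the center. So there is no single $L^1$ function $\varphi_S$ to which Theorem~\ref{lem:PET for discreate group} applies; the quantity being averaged changes with $m$. In the $\mathbb{Z}$-case this is resolved because conditioning on the full past collapses everything to the single function $I_{P\mid P^{-}}$ and the martingale convergence theorem; for a general amenable group there is no canonical past, the $\sigma$-algebras $\mathcal A_{<j}$ are not nested along any filtration that is consistent across different $m$, so the ``Doob/martingale-type concentration'' you invoke has no martingale behind it. Two further steps would also fail as stated: (i) the per-tile claim $H(P^{T_j}\mid\mathcal A_{<j})/|T_j|\to h(P)$ only comes for free as an \emph{upper} bound (conditioning decreases entropy and $H(P^{T})/|T|\to h(P)$); the matching lower bound holds only on average over $j$, via $\sum_j H(P^{T_j}\mid\mathcal A_{<j})=H(P^{\bigcup_j T_j})\geq |F_m|h(P)-H(P^{E_m})$, which is not enough for pointwise concentration tile by tile; and (ii) the naive Borel--Cantelli count of atoms of $P^{F_n}$ bounds $\limsup g_n$ only by $\log|\Lambda|$, not by $h(P)$, so the upper bound also needs the full tiling machinery. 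These are precisely the points where Lindenstrauss's actual proof requires substantial new ideas, so the proposal should be regarded as a heuristic outline rather than a proof.
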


In our case, we only use generating partitions, so the right side is always equal to the entropy $h$ of the system. For our purposes it will be more convenient to use convergence in measure, stated precisely as follows: for each $\delta > 0$ there exists $N$ such that for each $n > N$ we have:
\[
   2^{-|F_n|(h + \delta)} \leq \mu(P^{F_n}(x)) \leq 2^{-|F_n|(h - \delta)}
\]
with probability at least $1-\delta$ (meaning that the set of atoms of $P^{F_n}$ for which the above does not hold has joint measure less than $\delta$).

We can obtain similar properties for sufficiently large subsets of F\o lner sets, at the cost of worsening the estimates slightly.

\begin{lem}\label{lem:SMB for subsets}
    Suppose we have a finite set $F \subset G$ such that 
    \[
   2^{-|F|(h + \delta)} \leq \mu(P^{F}(x)) \leq 2^{-|F|(h - \delta)}
    \]
    with probability at least $1-\delta$. Then, there exists $\eta > 0$ (which depends on $\delta$ and $h$, but not on $F$) such that if $F'$ is a $(1-\eta)$-subset of $F$, then:
    \[
   2^{-|F'|(h + 3\delta)} \leq \mu(P^{F'}(x)) \leq 2^{-|F'|(h - 3\delta)}.
    \]
    with probability at least $1-3\delta$.
\end{lem}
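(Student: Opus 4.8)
The plan is to treat the two inequalities separately, exploiting that $F'\subset F$ makes $\bigvee_{g\in F}gP$ a refinement of $\bigvee_{g\in F'}gP$: every atom $P^{F'}(x)$ is a disjoint union of atoms of the form $P^{F}(y)$, and in particular $\mu(P^{F'}(x))\geq\mu(P^{F}(x))$. Write $G_F$ for the set of $x$ on which the hypothesized two-sided estimate for $P^{F}$ holds, so that $\mu(G_F)\geq 1-\delta$, and put $m=|P|$ and $r=|F\setminus F'|<\eta|F|$.

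The lower estimate is immediate from monotonicity. For $x\in G_F$ one has $\mu(P^{F'}(x))\geq\mu(P^{F}(x))\geq 2^{-|F|(h+\delta)}$, and since $|F|\leq|F'|/(1-\eta)$ the inequality $|F|(h+\delta)\leq|F'|(h+3\delta)$ holds as soon as $\frac{h+\delta}{1-\eta}\leq h+3\delta$, i.e. for $\eta$ small depending only on $\delta$ and $h$. Hence $\mu(P^{F'}(x))\geq 2^{-|F'|(h+3\delta)}$ for every $x\in G_F$, so the lower bound can fail only on $G_F^{c}$, a set of measure at most $\delta$.

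The upper estimate is the crux, and I would obtain it from a counting argument. Each atom $B$ of $\bigvee_{g\in F'}gP$ is a union of at most $m^{r}$ atoms of $\bigvee_{g\in F}gP$ (one per word on $F\setminus F'$), and every such subatom meeting $G_F$ has measure at most $2^{-|F|(h-\delta)}$; therefore $\mu(B\cap G_F)\leq m^{r}2^{-|F|(h-\delta)}=:M$, a bound uniform in $B$. Using $r<\frac{\eta}{1-\eta}|F'|$, I can choose $\eta$ small enough that the loss $r\log m$ is absorbed by the $2\delta$ of slack between $(h-\delta)$ and $(h-3\delta)$, arranging $M\leq\frac{1}{2}\,2^{-|F'|(h-3\delta)}$. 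Then any atom $B$ with $\mu(B)>2^{-|F'|(h-3\delta)}$ satisfies $\mu(B\cap G_F^{c})=\mu(B)-\mu(B\cap G_F)>\frac{1}{2}\mu(B)$, so more than half of its mass lies outside $G_F$; as these atoms are pairwise disjoint, their total measure is at most $2\mu(G_F^{c})\leq 2\delta$. Combining the two directions, the set where the conclusion fails has measure at most $\delta+2\delta=3\delta$.

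I expect the balancing in the upper estimate to be the main obstacle: one must fix a single $\eta$ (depending on $\delta$, $h$, and the ambient cardinality $|P|$, but not on $F$) that simultaneously tames the exponential factor $m^{|F\setminus F'|}$ from the forgotten coordinates and preserves the threshold $M\leq\frac{1}{2}\,2^{-|F'|(h-3\delta)}$. Implicit here is that $|F|$ be large enough for the factor $\frac{1}{2}$ to be harmless, which is automatic in the intended application, where $F$ is a large F\o lner set. The lower estimate, by contrast, requires nothing beyond monotonicity and elementary arithmetic with $\eta$.
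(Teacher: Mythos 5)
Your argument is correct and your lower bound is identical to the paper's, but your upper bound takes a genuinely different route. The paper classifies the $F$-subatoms by size: it counts the number $M$ of ``heavy'' $F$-blocks (those of measure at least $2^{-|F|(h-2\delta)}$), derives $M\cdot 2^{-|F|(h-2\delta)}\leq\delta$ from the hypothesis, bounds the total measure of the $F'$-atoms containing at least one heavy subatom by $\delta+Ms^{|F\setminus F'|}2^{-|F|(h-\delta)}\leq 2\delta$, and on the complement estimates each $F'$-atom by $s^{|F\setminus F'|}2^{-|F|(h-2\delta)}\leq 2^{-|F'|(h-3\delta)}$. You instead bound $\mu(B\cap G_F)$ uniformly over \emph{all} $F'$-atoms $B$ and dispose of the heavy $F'$-atoms by a Chebyshev-type argument: more than half of each one's mass must lie in $G_F^{c}$, and since the atoms are disjoint their union has measure at most $2\mu(G_F^{c})\leq 2\delta$. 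Your version is slicker (no counting of heavy blocks, no intermediate threshold at $h-2\delta$), but the factor $\tfrac12$ it relies on is precisely what forces the side condition $|F'|\delta\gtrsim 1$ that you flag yourself: the paper needs no lower bound on $|F|$ because it compares individual subatoms to a threshold rather than the total mass of $B$ to a threshold. As stated, the lemma carries no largeness assumption on $F$, so to be fully faithful you should either add one explicitly (harmless, since the lemma is only ever applied to large F\o lner sets) or adopt the paper's bookkeeping for the heavy atoms. Everything else --- the choice $\eta=\min\bigl(\tfrac{2\delta}{h+3\delta},\tfrac{\delta}{\log s}\bigr)$, the exceptional-measure budget of $\delta+2\delta$ --- matches the paper.
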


\begin{proof}
    
    Denote $s=\abs{P}$ and let $\eta = \min(\frac{2\delta}{h+3\delta}, \frac{\delta}{\log s})$. Such a choice of $\eta$ guarantees that $(1-\eta)(h+3\delta)\geq(h+\delta)$, and also that $\eta\log s \leq \delta$. Thus, if $\abs{F'}$ is a $(1-\eta)$-subset of $F$, we have the estimate $\abs{F'}(h+3\delta)\geq \abs{F}(h+\delta)$, and since $P^{F'}(x) \supseteq P^{F}(x)$, the following inequality holds with probability at least $1-\delta$:
    \[
        \mu(P^{F'}(x)) \geq \mu(P^{F}(x)) \geq 2^{-|F|(h+\delta)} \geq 2^{-|F'|(h+3\delta)}.
    \] 
    In the other direction, first note that since $F'\subset F$, the atom $P^{F'}(x)$ is a union of disjoint sets of the form $P^{F}(\hat{x})$, where $P^{F'}(x)=P^{F'}(\hat{x})$ and the only difference between those disjoint sets is caused by elements of $F\setminus F'$. It follows that $P^{F'}(x)$ corresponds to at most $s^{|F\setminus F'|}$ different blocks with domain $F$. 

     Let $M$ be the number of blocks $B$ with domain $F$ such that 
     \[
        \mu([B]) \geq 2^{-|F|(h-2\delta)}.
     \]
     Since the joint measure of such blocks is at most $\delta$, we see that  
     \[M\cdot 2^{-|F|(h-2\delta)} \leq \delta, \]
     and so
     \[M\cdot 2^{-|F|(h-\delta)} \leq \delta\cdot 2^{-|F|\delta}, \]
     Now we estimate the probability of the event that $P^{F'}(x)$ has a subset of the form $P^{F}(\hat{x})$ with measure at least $2^{-|F|(h-2\delta)}$. There are at most $M$ blocks which have such a subset, and so we can estimate their total measure by the following expression (where the first term is an upper estimate of the total size of the subsets of measure at least $2^{-|F|(h-\delta)}$, and the second term is the upper estimate on the total measure of all the other subsets):
    \[\begin{split}
        \delta + Ms^{|F\setminus F'|}2^{-|F|(h-\delta)} &\leq \delta + s^{\eta\abs{F}}\cdot M\cdot2^{-|F|(h-\delta)}  \leq \delta(1+s^{\eta\abs{F}}2^{-|F|\delta}) =\\ 
        & =  \delta(1+2^{\eta\log s\abs{F}}2^{-|F|\delta})= \delta(1+2^{\abs{F}(\eta \log s-\delta)}) \leq 2\delta.\end{split}\]
    For all other $x$, and thus with probability at least $1-2\delta$, we have:
    \[
        \mu(P^{F'}(x)) \leq s^{|F\setminus F'|}2^{-|F|(h-2\delta)} \leq 2^{-|F'|(h-3\delta)}.
    \]
    
    Ultimately (combining this with the previous inequality, which holds with probability greater than $1-\delta$), we have:
    \[
        2^{-|F'|(h+3\delta)} \leq  \mu(P^{F'}(x)) \leq 2^{-|F'|(h-3\delta)}
    \]
    with probability at least $1-3\delta$.
\end{proof}

We will relate the notions of entropy theory to the approximate inclusions and equalities defined in the previous section, using the following lemma:
\begin{lem}\label{lem:inclusion_from_entropy}
    For any countable partitions $P$, $Q$ of a probability space $(X,\CB,\xi)$, and any $\epsilon > 0$, there exists $\delta >0 $ such that if $H_\xi(P|Q) < \delta$, then $P \subset_{\epsilon} Q \mod \xi$.
\end{lem}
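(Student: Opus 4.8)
The plan is to build the $Q$-measurable approximation of each $P$-atom by a maximum-likelihood decoding of $P$ from $Q$, and then to control the total error probability by the conditional entropy. Concretely, for each atom $C\in Q$ with $\xi(C)>0$ I would let $\phi(C)$ be an atom of $P$ maximizing the conditional probability $p_{A\mid C}:=\xi(A\cap C)/\xi(C)$ (the maximum is attained, since the $p_{A\mid C}$ are nonnegative and sum to $1$, so for every $t>0$ fewer than $1/t$ of them exceed $t$), and set $q_C:=1-p_{\phi(C)\mid C}$. For each $A\in P$ I define $B_A:=\bigcup\{C\in Q:\phi(C)=A\}$, which is by construction a union of atoms of $Q$, and I let $E:=X\setminus\bigcup_{C\in Q}(\phi(C)\cap C)$ be the global decoding-error set, so that $\xi(E)=\sum_{C\in Q}\xi(C)\,q_C$.

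The first step is a purely set-theoretic observation: for every $A\in P$ one has $A\symdiff B_A\subseteq E$. Indeed, a point of $A\setminus B_A$ lies in $A$ but in some $C$ with $\phi(C)\neq A$, and a point of $B_A\setminus A$ lies in some $C$ with $\phi(C)=A$ but outside $A$; in either case the point is misdecoded and hence lies in $E$. Consequently $\xi(A\symdiff B_A)\leq\xi(E)$ for all $A$ simultaneously, so it suffices to make the single quantity $\xi(E)$ smaller than $\epsilon$.

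The second step bounds $\xi(E)$ by $H_\xi(P\mid Q)$. I would deliberately avoid the usual Fano inequality (whose bound involves $\log\abs{P}$ and is therefore useless for countable $P$) and instead use the alphabet-free estimate
\[ H_\xi(P\mid C)=\sum_{A}p_{A\mid C}\log\frac{1}{p_{A\mid C}}\ \geq\ \log\frac{1}{\max_A p_{A\mid C}}=-\log(1-q_C)\ \geq\ q_C, \]
which is valid because $p_{A\mid C}\leq p_{\phi(C)\mid C}=1-q_C$ for every $A$, and because $-\log(1-q)\geq q$ on $[0,1)$ (with base-$2$ logarithms, $-\log_2(1-q)\geq -\ln(1-q)\geq q$). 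Averaging over $Q$ gives $H_\xi(P\mid Q)=\sum_C\xi(C)H_\xi(P\mid C)\geq\sum_C\xi(C)q_C=\xi(E)$. Taking $\delta=\epsilon$ then yields the claim: if $H_\xi(P\mid Q)<\delta$ then $\xi(E)<\epsilon$, whence $\xi(A\symdiff B_A)<\epsilon$ for all $A$, i.e.\ $P\subset_\epsilon Q\bmod\xi$.

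The only place where the countability of $P$ really enters—and thus the main point to get right—is precisely this entropy estimate: the finite-alphabet Fano bound degenerates, so the argument hinges on the elementary inequality $H_\xi(P\mid C)\geq\log\bigl(1/\max_A p_{A\mid C}\bigr)$, which holds regardless of how many atoms $P$ has. Everything else (attainment of the maximum defining $\phi$, measurability of $E$ and of the sets $B_A$, and the symmetric-difference inclusion) is routine.
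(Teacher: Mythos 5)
Your proof is correct, and while it shares the paper's central idea (maximum-likelihood decoding: assign to each $Q$-atom $C$ its most probable $P$-atom $\phi(C)$, which is exactly the paper's ``dominating set'', and let $B_A$ be the union of the $C$'s decoded to $A$), the quantitative mechanism is genuinely different and cleaner. The paper first applies a Markov/rectangle-rule argument to isolate the $Q$-atoms with $H_B(P)<\sqrt{\delta}$, shows on those atoms that the dominating conditional probability exceeds $1-\sqrt{\delta}$, and then bounds $\xi(A\symdiff B_A)$ by a three-term decomposition, each term costing $\sqrt{\delta}$; this forces $\delta<\epsilon^2/9$. You instead observe the single inclusion $A\symdiff B_A\subseteq E$ into the global decoding-error set and bound $\xi(E)=\sum_C\xi(C)q_C$ directly by $H_\xi(P\mid Q)$ via the min-entropy inequality $H_\xi(P\mid C)\geq-\log\max_A p_{A\mid C}=-\log(1-q_C)\geq q_C$; this avoids the rectangle rule entirely, gives the linear dependence $\delta=\epsilon$, and works uniformly over all $A\in P$ at once since every $A\symdiff B_A$ sits inside the same set $E$. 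All the details you flag as routine really are routine (attainment of the maximum for countable $P$, measurability of $B_A$ and $E$, the strictness of the final inequality), so the argument is complete; the only thing your approach gives up is the explicit ``dominating probability close to $1$'' picture that the paper's version makes visible, but nothing downstream in the paper uses that.
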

\begin{proof}
First recall the following version of the rectangle rule: if $\mu$ is a probability measure on a set $\Omega$ and $f \geq 0$ is a measurable function on $\Omega$ such that $\int f \text{ d}\mu < ab$ for some $a > 0$ and $b > 0$, then
       $\mu(\{\omega\in\Omega:f(\omega) \geq a\}) < b$.

   For the convenience of the reader, we recall the conditional entropy formula:
    
    \[
        H_\xi(P|Q) = \sum_{B \in Q} \xi(B) H_{B}(P),
    \]
    where $H_B(P)$ is the entropy of $P$ with respect to the conditional measure $\xi_B(A)=\frac{\xi(B\cap A)}{\xi(B)}$ (if $\xi(B)=0$, we set $H_B(P)=0$). Now let $\delta < \frac{1}{9} \epsilon^2$. If $H_\xi(P|Q)<\delta$, the rectangle rule implies that $H_{B}(P) < \sqrt{\delta}$ on sets $B\in Q$ of joint measure at least $1-\sqrt{\delta}$.  Let $t$ denote the largest value of $\xi_B(A)$ for $A\in P$ (we will refer to such an $A$ as the \emph{dominating set}). Since $H_{B}(P)$ can be assumed to be arbitrarily small (in particular, less than $\log 2$), we can assume that $t > \frac{1}{2}$. The upper estimate on $H_{B}(P)$ yields a lower estimate on $t$ as follows:
    \[
        \sqrt{\delta} \geq H_{B}(P) \geq -t\log t - (1-t)\log(1-t) \geq 2t(1-t) \geq 1-t,
    \]
    so $t\geq 1 - \sqrt{\delta}$ (we have used the inequality $-\log t \geq 1-t$). 
    
    Now, for any $A \in P$ we define
    \[
    B_A =   \{\bigcup B:H_{B}(P) < \sqrt{\delta},\ A\text{ is the dominating set for }B\}
    \] and $B_P = \bigcup_{A\in P} B_A$. Now we are ready to estimate $\xi(A \bigtriangleup B_A)$:
    
    \begin{align*}
        \xi(A \bigtriangleup B_A) &= \xi((A \bigtriangleup B_A)\cap B_A) + \xi((A \bigtriangleup B_A)\cap (B_P\setminus B_A)) + \xi((A \bigtriangleup B_A)\cap (X\setminus B_P )) \\ 
        &\leq \xi(B_A \setminus A) + \xi(A \cap (B_P\setminus B_A)) + \xi(A\setminus B_P) \\ 
        &\leq (1-t)\xi(B_A) + (1-t)\xi(B_P) + \sqrt{\delta} \leq \sqrt{\delta} + \sqrt{\delta} + \sqrt{\delta} = 3\sqrt{\delta} < \epsilon,
    \end{align*}
    and thus $P \subset_{\epsilon} Q$ modulo $\xi$.
\end{proof}

Finally, there is one more technical result we will need (compare \cite{G}, for the $\mathbb{Z}$-action case). 

\begin{lem}
\label{lem:higher entropy}
    Let $(X, \CA, \mu, G)$ and $(Y, \CB, \nu, G)$ be ergodic symbolic dynamical systems, where the alphabet of $X$ has cardinality $s$. Let $Z=X\times Y$ be the product space, and let $\xi$ be any joining of $\mu$ and $\nu$, i.e. any invariant measure on $Z$ such that its marginal measures $\xi^X$, $\xi^Y$ are $\mu$ and $\nu$, respectively. For every $\epsilon\in(0,1)$ there exists an ergodic measure $\xi_1$ on $Z$, such that
    \begin{enumerate}
        \item $\xi_1^Y=\xi^Y$
        \item $d(\xi_1,\xi)<2\epsilon$
        \item $h_{\xi_1^X}(X) \geq h_{\xi^X}(X) + \epsilon(\log(s)-h_{\xi^X}(X))$. In particular, if $h_{\xi^X}(X) < \log s$, then  $h_{\xi_1^X}(X) > h_{\xi^X}(X)$. 
    \end{enumerate}
    
\end{lem}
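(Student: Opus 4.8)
The plan is to obtain $\xi_1$ by injecting a small, independent amount of maximal-entropy randomness into the $X$-coordinate while leaving the $Y$-coordinate untouched. Concretely, I would enlarge the probability space by two mutually independent auxiliary Bernoulli processes over $G$, both independent of $(x,y)\sim\xi$: a process $\omega\in\Lambda^G$ that is i.i.d. uniform on the $s$-letter alphabet $\Lambda$ of $X$, and a process $b\in\{0,1\}^G$ that is i.i.d. with $\mathbb P(b(g)=1)=\epsilon$. Define a new $X$-configuration $\tilde x$ by overwriting: $\tilde x(g)=\omega(g)$ when $b(g)=1$ and $\tilde x(g)=x(g)$ when $b(g)=0$ (in the full-shift setting in which the lemma is applied, $\tilde x$ again lies in $X$). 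Let $\xi_1$ be the law of $(\tilde x,y)$; equivalently, $\xi_1$ is the image of the product measure $\xi\times\rho\times\beta$ under the shift-commuting factor map $((x,y),\omega,b)\mapsto(\tilde x,y)$, where $\rho,\beta$ denote the Bernoulli measures of $\omega,b$.

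Property (1) is immediate, since the $Y$-coordinate is never altered, so $\xi_1^Y=\xi^Y=\nu$. For property (2), the construction furnishes an explicit coupling of $\xi_1$ and $\xi$ in which the $Y$-coordinates agree everywhere and the $X$-coordinates can disagree at $g$ only when $b(g)=1$ and $\omega(g)\neq x(g)$, an event of probability at most $\epsilon$ per coordinate. Hence for every $n$ one has $\sum_{B\in\CB_n}|\xi_1(B)-\xi(B)|\le 2\epsilon|F_n|$, and feeding this into the definition of $d$, using $|\CB_n|\ge 2^{|F_n|}$ and $k2^{-k}\le\tfrac12$, gives $d(\xi_1,\xi)\le 2\epsilon\sum_n 2^{-n}|F_n|2^{-|F_n|}\le\epsilon<2\epsilon$.

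The heart of the argument is property (3), the entropy bound for $\xi_1^X=\mathrm{law}(\tilde x)$. Writing $h(\cdot)$ for entropy rates and using $H(b|_{F_n}\mid\tilde x|_{F_n})\le H(b|_{F_n})$, one first obtains $h(\tilde x)\ge h(\tilde x\mid b)$. For each realization of $b$, with $S_1=\{g:b(g)=1\}$ and $S_0=\{g:b(g)=0\}$, the configuration $\tilde x$ splits over these disjoint coordinate sets into the fresh symbols $\omega|_{S_1}$ and the retained symbols $x|_{S_0}$, which are conditionally independent because $\omega\perp x$; hence its conditional entropy over $F_n$ equals
\[
|S_1\cap F_n|\log s+H_\mu(x|_{S_0\cap F_n}),
\]
the first term because $\omega$ is i.i.d. uniform. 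Averaging over $b$, dividing by $|F_n|$ and letting $n\to\infty$, the first term contributes $\epsilon\log s$. For the second term I would invoke a probabilistic Shearer inequality: since $S_0$ includes each coordinate independently with probability $1-\epsilon$ and is independent of $x$, fixing an order on $F_n$ and applying the chain rule gives, for each realization of $S_0$, $H_\mu(x|_{S_0\cap F_n})\ge\sum_{g\in S_0\cap F_n}H_\mu\big(x(g)\mid x|_{<g}\big)$, so that $\mathbb E\,H_\mu(x|_{S_0\cap F_n})\ge(1-\epsilon)H_\mu(x|_{F_n})$; dividing by $|F_n|$ this contributes at least $(1-\epsilon)h_{\xi^X}(X)$. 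Combining,
\[
h_{\xi_1^X}(X)=h(\tilde x)\ge\epsilon\log s+(1-\epsilon)h_{\xi^X}(X)=h_{\xi^X}(X)+\epsilon\big(\log s-h_{\xi^X}(X)\big),
\]
which is exactly (3), the final clause following since the added term is strictly positive when $h_{\xi^X}(X)<\log s$.

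It remains to secure ergodicity, which I expect to be the main obstacle. When $\xi$ is ergodic there is nothing more to do: $\rho\times\beta$ is Bernoulli, hence weakly mixing, so $\xi\times\rho\times\beta$ is ergodic and $\xi_1$, being a factor of it, is ergodic, while (1)--(3) were verified above. For a general $\xi$ I would reduce to this case through the ergodic decomposition: because $\mu=\xi^X$ and $\nu=\xi^Y$ are ergodic, almost every ergodic component $\xi_t$ of $\xi$ is again a joining of $\mu$ and $\nu$ (its marginals are ergodic and integrate to $\mu$, $\nu$), so the componentwise construction yields ergodic measures satisfying (1), and (3) with the same constants, the latter because $\xi_t^X=\mu$ makes the entropy bound uniform in $t$. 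The delicate point is (2): a single ergodic component of the perturbed measure need not lie within $2\epsilon$ of $\xi$ itself, so one must either restrict to ergodic $\xi$ (as happens in the applications) or first approximate $\xi$ weak-$*$ by an ergodic joining with the same $Y$-marginal before perturbing. Reconciling closeness with ergodicity is thus the only genuinely subtle step; the entropy estimate via the Shearer inequality is the substantive core of the proof.
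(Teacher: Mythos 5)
Your construction is exactly the one in the paper: the auxiliary process $c$ used there, with $\mathbb{P}(c_g=0)=1-\epsilon$ and $\mathbb{P}(c_g=i)=\epsilon/s$, is precisely your pair $(b,\omega)$ fused into one symbol, and the overwriting map, the verification of (1), and the coordinatewise estimate $\sum_{B\in\CB_n}|\xi_1(B)-\xi(B)|\leq 2\epsilon|F_n|$ leading to (2) all match the paper's argument. Where you genuinely diverge is the entropy bound (3): the paper expands $\xi_1^X$ of each cylinder as an explicit convex combination over the overwritten coordinate sets, applies concavity of $-x\log x$, and then counts multiplicities combinatorially to reassemble $H_{\xi^X}(P^{\overline a})\geq k\,h_{\xi^X}(X)$; you instead pass to $H(\tilde x|_{F_n})\geq H(\tilde x|_{F_n}\mid b|_{F_n})$, split the conditional entropy into the i.i.d.\ part $|S_1\cap F_n|\log s$ plus $H_\mu(x|_{S_0\cap F_n})$, and lower-bound the expectation of the latter by $(1-\epsilon)H_\mu(x|_{F_n})$ via the chain rule over a random subset. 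Both routes yield the identical bound $\epsilon\log s+(1-\epsilon)h_{\xi^X}(X)$; yours is shorter and more conceptual, at the cost of invoking the identity $h=\inf_F\frac1{|F|}H(P^F)$ and the Shearer-type step, while the paper's is self-contained but heavier. One further point in your favour: you explicitly address ergodicity of $\xi_1$ (via weak mixing of the Bernoulli factor when $\xi$ is ergodic, which is the only case used in the applications), whereas the paper's proof only checks invariance and leaves ergodicity implicit; your observation that the statement's ``any joining'' is problematic for non-ergodic $\xi$ because a single ergodic component of $\xi_1$ need not stay $2\epsilon$-close to $\xi$ is a legitimate caveat about the lemma as stated, not a gap in your argument.
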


\begin{proof}
    Let $P$ and $Q$ denote the finite partitions of $Z$ induced by symbols of $X$ and $Y$ respectively\footnote{These are the same partitions as the ones defined more precisely on page \pageref{partitions_def}.}.  The basic idea is to perturb the $X$-values independently of $\xi$. Let $X_0=\{0,1,2,\ldots,s\}^G$ be a shift space with group action and the Bernoulli measure $\mu_0$ such that $\mu_0(0)=1-\epsilon$ and $\mu_0(i)=\frac{\epsilon}{s}$ for $i=1,2,\ldots,s$. Put $W = Z \times X_0$, $\zeta = \xi \times \mu_0$ and define the mapping $\Phi: W \rightarrow W$ in the following way
    \[
        \Phi(x,y,c) = (x^1,y^1,c^1)
    \]
    where $y^1=y$ and $c^1=c$, and
    \[
x_g^1 = \left\{ \begin{array}{ll}
x_g & \textrm{ if } c_g=0\\
c_g & \textrm{ if } c_g \neq 0.

\end{array} \right.
\]

Define $\zeta_1 = \zeta \circ \Phi^{-1}$ and $\xi_1 = {\zeta_1}^{X\times Y}$. It is clear that $\zeta_1$ is $G \times G \times G$-invariant, and so $\xi_1$ is $G \times G$-invariant.

Recall that we use the following metric on the set of joinings:
\[ d(\xi_1,\xi_2)=\sum_{n=1}^\infty\frac{1}{2^n}\frac{1}{\abs{\CB_n}}\sum_{D\in\CB_n}\abs{\xi_1(D)-\xi_2(D)}, \]
where $\CB_n=\bigvee_{g\in F_n} g(P \vee Q)$.
% \begin{fct}
% If $\CA$ is a generating partition, then for every $\eta>0$ there exists a $\delta$ and $N$ such that if $\abs{\xi(D)-\xi_1(D)}<\delta$ for all $D \in \bigvee_{g\in F_N}g\CA$, then $d(\xi,\xi_1)<\eta$.
% \end{fct}
Let $F = \{g_1,g_2,\ldots,g_m\}$ be a finite subset of $G$, and let $D \in \bigvee_{g\in F} g(P \vee Q)$. Now we need to estimate $|\xi(D) - \xi_1(D)|$. It is clear that
\begin{align*}
    \xi_1(D)&=\zeta_1(D\times X_0) = \\ &= \zeta_1(D \times [c_{g_1}=0, c_{g_2}=0, \ldots, c_{g_m}=0]) + \zeta_1(D \times [c_{g_1}=0, c_{g_2}=0, \ldots, c_{g_m}=0]^{\complement}).
\end{align*}
Let $\delta_0$ be the probability measure on $X_0$ concentrated in the single point which has $0$'s on all coordinates. If we now write $\zeta_0=\xi\times \delta_0$, we naturally have 
\[\xi(D)=\zeta_0(D\times X_0)=\zeta_0(D\times [c_{g_1}=0, c_{g_2}=0, \ldots, c_{g_m}=0]).\]
Thus
\begin{align*}
\abs{\xi_1(D)-\xi(D)}&\leq \abs{\zeta_0(D\times [c_{g_1}=0, c_{g_2}=0, \ldots, c_{g_m}=0])-\zeta_1(D\times [c_{g_1}=0, c_{g_2}=0, \ldots, c_{g_m}=0])}+\\
&+\zeta_1(D \times [c_{g_1}=0, c_{g_2}=0, \ldots, c_{g_m}=0]^{\complement})
\end{align*}
By the construction of $\zeta_0$ and $\zeta_1$ respectively, we have
\[\zeta_0(D\times [c_{g_1}=0, c_{g_2}=0, \ldots, c_{g_m}=0])=\xi(D),\] 
and
\[\zeta_1(D\times [c_{g_1}=0, c_{g_2}=0, \ldots, c_{g_m}=0])=\xi(D)(1-\eps)^m,\] 
and so
\begin{align*}
\abs{\xi_1(D)-\xi(D)}& \leq  \xi(D)(1-(1-\eps)^m)+ \zeta_1(D \times [c_{g_1}=0, c_{g_2}=0, \ldots, c_{g_m}=0]^{\complement}).
\end{align*}
Summing  over all $D\in  \bigvee_{g\in F} g(P \vee Q)$, we have
\begin{align*}
\sum_{D}\abs{\xi_1(D)-\xi(D)}& \leq \sum_{D}\left(\xi(D)(1-(1-\eps)^m)\right)+\\
&\phantom{\leq}+\sum_{D}\zeta_1(D \times [c_{g_1}=0, c_{g_2}=0, \ldots, c_{g_m}=0]^{\complement})=\\
&=(1-(1-\eps)^m)+\zeta_1(Z\times [c_{g_1}=0, c_{g_2}=0, \ldots, c_{g_m}=0]^{\complement})=&\\
&=2(1-(1-\eps)^m)\leq 2m\eps= 2\abs{F}\eps,
\end{align*}
where the final estimate follows from Bernoulli's inequality. This gives us the following estimate for the metric:
\begin{align*} d(\xi,\xi_1)&=\sum_{n=1}^\infty\frac{1}{2^n}\frac{1}{\abs{\CB_n}}\sum_{D\in\CB_n}\abs{\xi(D)-\xi_1(D)}\leq\\ &\leq \sum_{n=1}^\infty\frac{1}{2^n}\frac{1}{\abs{\CB_n}}\cdot2\abs{F_n}\eps \leq 2\eps.
\end{align*}

% \begin{align*}
%     \eta_1(D \times [c_{g_1}=0, c_{g_2}=0, \ldots, c_{g_n}=0]) &= \eta(\Phi^{-1}(D \times [c_{g_1}=0, c_{g_2}=0, \ldots, c_{g_n}=0]) = \\ &= \eta((D \times [c_{g_1}=0, c_{g_2}=0, \ldots, c_{g_n}=0]) = \xi(D)(1-\epsilon)^n.
% \end{align*}
% On the other hand
% \[
%     \eta_1(D \times [c_{g_1}=0, c_{g_2}=0, \ldots, c_{g_n}=0]^{\complement}) \leq \eta_1(Z \times [c_{g_1}=0, c_{g_2}=0, \ldots, c_{g_n}=0]^{\complement}) = (1-(1-\epsilon)^n), 
% \]
% so there exists $t\in(0,1)$ such that 
% \[
%     \eta_1(D \times [c_{g_1}=0, c_{g_2}=0, \ldots, c_{g_n}=0]^{\complement})=t(1-(1-\epsilon)^n).
% \]
% At the end we have
% \begin{align*}
%     |\xi(D) - \xi_1(D)| = |\xi(D) (1-(1-\epsilon)^n) - t(1-(1-\epsilon)^n)| = |\xi(D)-t|(1-(1-\epsilon)^n) \leq 1-(1-\epsilon)^n 
% \end{align*}

% TU jeszcze nie koniec chyba, u mnie na stronie 68 były jakieś koszmarne rachunki dla oszacowania odległości między miarami, nie można tego uprościć?

% d(\xi, \xi_1)
The next part is the entropy. First we analyze  $\xi_1^X(x^1_{g_1} = s_1, x^1_{g_2} = s_2, \ldots, x^1_{g_m} = s_m)$, where $\{g_1,g_2,\ldots, g_m\} = F$ are group elements and $(s_1,s_2,\ldots,s_m) = \overline{s}$ are corresponding symbols.
Let $(a_1,a_2,\ldots,a_k) = \overline{a}$ be the indices such that $c_{g_{a_i}} = 0$ and let $(b_1, b_2, \ldots, b_{m-k}) = \overline{b}$ be the indices where $c_{g_{b_i}} = s_{b_i} \neq 0$. Of course $\{g_{a_1},g_{a_2},\ldots,g_{a_k}, g_{b_1}, g_{b_2}, \ldots, g_{b_{m-k}}\} = F$. We have
\begin{align*}
   &\zeta_1^{X\times X_0}(x^1_{g_1} = s_1, x^1_{g_2} = s_2, \ldots, x^1_{g_m} = s_m, c_{g_{a_1}} = 0, \ldots, c_{g_{a_k}} = 0, c_{g_{b_1}} = s_{b_1}, \ldots, c_{g_{b_{m-k}}} = s_{b_{m-k}}) = \\ 
   &= (1-\epsilon)^k \Big(\frac{\epsilon}{s}\Big)^{m-k}\xi^X(x_{g_{a_1}} = s_{a_1}, \ldots, x_{g_{a_k}} = s_{a_k}) = \\
   &= (1-\epsilon)^k \epsilon^{m-k}\frac{\xi^X(x_{g_{a_1}} = s_{a_1}, \ldots, x_{g_{a_k}} = s_{a_k})}{s^{m-k}}.
\end{align*}
To simplify notation, we denote the set $\{x_{g_{a_1}} = s_{a_1}, \ldots, x_{g_{a_k}} = s_{a_k}\}$ as $D_{\overline{a},\overline{s}}^k$, and then we can write
\begin{align*}
    \xi_1^X(x^1_{g_1} = s_1, \ldots, x^1_{g_m} = s_m) = \sum_{k=0}^m \sum_{(a_i)_{i=1}^{k}} (1-\epsilon)^k \epsilon^{m-k}\frac{\xi^X(D_{\overline{a},\overline{s}}^k)}{s^{m-k}}.
\end{align*}
Using concavity of the function $f(x)=-x\log x$ we get
\begin{align*}
    f(\xi_1^X(x^1_{g_1} = s_1, \ldots, x^1_{g_m} = s_m)) &= f\Big(\sum_{k=0}^m \sum_{(a_i)_{i=1}^{k}} (1-\epsilon)^k \epsilon^{m-k}\frac{\xi^X(D_{\overline{a},\overline{s}}^k)}{s^{m-k}}\Big) \geq \\ 
    &\geq \sum_{k=0}^m \sum_{(a_i)_{i=1}^{k}} (1-\epsilon)^k \epsilon^{m-k} f\Big( \frac{\xi^X(D_{\overline{a},\overline{s}}^k)}{s^{m-k}}\Big).
\end{align*} 
Recall that $P^F = \bigvee_{g\in F} gP$, and so we can write
\begin{align*}
    H_{\xi_1^X}(P^F) &= \sum_{(s_j)_{j=1}^m} f(\xi_1^X(x^1_{g_1} = s_1, \ldots, x^1_{g_m} = s_m)) \geq \\ 
    &\geq \sum_{(s_j)_{j=1}^m} \sum_{k=0}^m \sum_{(a_i)_{i=1}^{k}} (1-\epsilon)^k \epsilon^{m-k} f\Big( \frac{\xi^X(D_{\overline{a},\overline{s}}^k)}{s^{m-k}}\Big).
\end{align*}

Now the value $f\Big( \frac{\xi^X(D_{\overline{a},\overline{s}}^k)}{s^{m-k}}\Big)$ depends only on the indices $a_i$ and the values $s_{a_i}$. For a fixed choice of $a_i$ and $s_{a_i}$, we are left with the specific set of indices $b_j$ of cardinality $m-k$, and each value $s_{b_j}$ could be any symbol from the alphabet. In the end, $f\Big( \frac{\xi^X(D_{\overline{a},\overline{s}}^k)}{s^{m-k}}\Big)$ appears exactly $s^{m-k}$ times. Furthermore, denote $P^{\overline{a}} = \bigvee_{{a_i} \in \overline{a}} g_{a_i}P$ and observe that \[
h_{\xi^X}(X) = h_{\xi^X}(P) = \inf_{F\subset G} \frac{1}{|F|}H_{\xi^X}(P^{F}) \leq \frac{1}{\abs{{\overline{a}}}}H_{\xi^X}(P^{{\overline{a}}}),
\]
whence
\[H_{\xi^X}(P^{\overline{a}})\geq \abs{\overline{a}} h_{\xi^X}(X)=k h_{\xi^X}(X).\] 
Using all of the above, we have
\begin{align*}
     H_{\xi_1^X}(P^F) &\geq \sum_{(s_i)_{i=1}^{n}} \sum_{k=0}^m \sum_{(a_i)_{i=1}^{k}} (1-\epsilon)^k \epsilon^{m-k} f\left( \frac{\xi^X(D_{\overline{a},\overline{s}}^k)}{s^{m-k}}\right) = \\ 
     &= \sum_{k=0}^m \sum_{(a_i)_{i=1}^{k}} \sum_{(s_{a_i})_{i=1}^k} (1-\epsilon)^k \epsilon^{m-k}s^{m-k} f\left( \frac{\xi^X(D_{\overline{a},\overline{s}}^k)}{s^{m-k}}\right) = \\ 
     &= \sum_{k=0}^m \sum_{(a_i)_{i=1}^{k}} \sum_{(s_{a_i})_{i=1}^k} (1-\epsilon)^k \epsilon^{m-k} \xi^X(D_{\overline{a},\overline{s}}^k)  \left(-\log (\xi^X(D_{\overline{a},\overline{s}}^k)) +\log s^{m-k}\right) = \\ 
     &=  \sum_{k=0}^m \sum_{(a_i)_{i=1}^{k}} \sum_{(s_{a_i})_{i=1}^k} (1-\epsilon)^k \epsilon^{m-k} \left[f(\xi^X(D_{\overline{a},\overline{s}}^k)) + \xi^X(D_{\overline{a},\overline{s}}^k)\log s^{m-k}\right] = \\
     &= \sum_{k=0}^m \sum_{(a_i)_{i=1}^{k}} (1-\epsilon)^k \epsilon^{m-k} \left[H_{\xi^X}(P^{{\overline{a}}}) +\log s^{m-k}\right] \geq \\
     &\geq \sum_{k=0}^m \binom{m}{k} (1-\epsilon)^k \epsilon^{m-k} \left[ kh_{\xi^X}(X) +\log s^{m-k}\right].
\end{align*}

Finally we get
\begin{align*}
    \frac{1}{m}H_{\xi_1^X}(P^F) 
    &\geq \sum_{k=0}^m \binom{m}{k} (1-\epsilon)^k \epsilon^{m-k} \Big[\frac{kh_{\xi^X}(X) +(m-k)\log s}{m}\Big]   \\
    & =\sum_{k=0}^m \binom{m}{k} (1-\epsilon)^k \epsilon^{m-k} \Big[\frac{m h_{\xi^X}(X) +(m-k)(\log s-h_{\xi^X}(X))}{m}\Big] \\      & = h_{\xi^X}(X)+(\log s-h_{\xi^X}(X))\cdot\sum_{k=0}^m \binom{m}{k}\cdot\frac{m-k}{m}(1-\epsilon)^k \epsilon^{m-k} \\
    & = h_{\xi^X}(X) + \epsilon\cdot(\log s - h_{\xi^X}(X)),
\end{align*}

and so
\[
    h_{\xi_1^X}(X) = \inf_{F\subset G}\frac{1}{\abs{F}}H_{\xi_1^X}(P^F) \geq h_{\xi^X}(X) + \epsilon(\log s - h_{\xi^X}(X)).
\]

\end{proof}

\section{The main results}
Throughout this section, $(Y, \CB, \nu, G)$ will be a symbolic dynamical system over the countable alphabet $\set{1,2,3,\ldots}$, with $\nu$ being a fixed ergodic measure on $\CB$, with finite entropy $h_\nu$ (note that any ergodic dynamical system with finite entropy has a countable generating partition, and thus is isomorphic to a system of the form $(Y, \CB, \nu, G)$ as defined here). In addition, $(X, \CA, G)$ will be a symbolic dynamical system over the finite alphabet ${1,2,\ldots,s}$, where $s\geq 2$ and $\log s \geq h_\nu$, but the measure on $X$ may (and will) vary. Finally, $(Z,\CC, G)$ is the product space with the product $\sigma$-algebra and the product action. 

As stated in the introduction, we will use joining techniques as a unified approach to proving the following three classical theorems:
\begin{thm}[The finite generator theorem]
    If $(Y,\CB,\nu, G)$ is an ergodic dynamical system with finite entropy $h_\nu$, then there exists a measure $\mu$ on $\CA$ such that the systems $(X,\CA,\mu,G)$ and $(Y, \CB, \nu, G)$ are isomorphic.
\end{thm}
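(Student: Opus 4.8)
The plan is to realize the desired measure $\mu$ as the $X$-marginal of a joining of $(Y,\CB,\nu,G)$ with a (varying) measure on $X$, chosen so that the joining is supported on the graph of an isomorphism. I would therefore work in the space $\mathcal{J}$ of all $G$-invariant probability measures $\xi$ on $Z=X\times Y$ whose $Y$-marginal equals $\nu$. Since $X$ is a compact subshift and the $Y$-marginal is pinned to the single measure $\nu$, the family $\mathcal{J}$ is tight and weak-$*$ closed, hence compact and metrizable in the metric $d$ introduced above; in particular $\mathcal{J}$ is a Baire space. Writing $P$ for the finite partition of $Z$ into cylinders on the first $X$-coordinate and $Q$ for the countable partition into cylinders on the first $Y$-coordinate, a joining $\xi$ is the graph of an isomorphism exactly when $P$ and $Q$ generate each other, that is, when both $Q\subset\bigvee_{g\in G}gP\bmod\xi$ and $P\subset\bigvee_{g\in G}gQ\bmod\xi$ hold.

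I would then consider $R_1=\{\xi\in\mathcal{J}:Q\subset\bigvee_{g\in G}gP\bmod\xi\}$ and $R_2=\{\xi\in\mathcal{J}:P\subset\bigvee_{g\in G}gQ\bmod\xi\}$, realizing each as a countable intersection over $k$ of unions over finite $F\subset G$ of the approximate-inclusion sets $\{\xi:Q\subset_{1/k}\bigvee_{g\in F}gP\}$ (and symmetrically for $R_2$), after first replacing the countable partition $Q$ by finite truncations, which is harmless since $H_\nu(Q)<\infty$. Because $\subset_{1/k}$ is a condition on finitely many cylinder measures with strict inequalities, and the relevant sets are clopen in $Z$, each such set is weak-$*$ open, so $R_1$ and $R_2$ are $G_\delta$. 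Granting that both are dense, $R_1\cap R_2$ is a dense $G_\delta$, hence nonempty; any $\xi$ in it yields mutually generating $P,Q$, so $\xi$ is the graph of an isomorphism between $(X,\xi^X,G)$ and the ergodic $(Y,\nu,G)$, and we set $\mu=\xi^X$.

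The core of the argument is density, where the quasitiling machinery enters. Given a target $\xi$ and $\epsilon>0$, I would fix a generic point and a dynamical quasitiling with large, very invariant tiles from Lemma \ref{lem:dyntil}, and then build a nearby joining by re-coding the joined symbolic element tile by tile, using Corollary \ref{cor:measure_approximation} applied in the product system $Z$ to keep the re-coded joining within $\epsilon$ of $\xi$ (the tiles being large subsets of late \Folner sets and the $Z$-blocks written on them being close to $\xi$). For density of $R_1$ I would first apply Lemma \ref{lem:higher entropy} to replace $\xi$ by a nearby joining $\xi_1$ whose $X$-marginal has entropy arbitrarily close to $\log s\geq h_\nu$; the Shannon--McMillan--Breiman theorem (Theorem \ref{lem:SMB}, with Lemma \ref{lem:SMB for subsets} for tiles) then guarantees that on each tile the $\xi_1^X$-typical $X$-blocks outnumber the $\nu$-typical $Y$-blocks, so the $X$-coordinates can be re-coded to carry injectively the $Y$-block seen on the tile. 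Reading the $X$-coordinates then recovers the $Y$-coordinates, which drives $H_{\xi_1}(Q\mid\bigvee_{g\in F}gP)$ below any prescribed threshold, and Lemma \ref{lem:inclusion_from_entropy} converts this into the approximate inclusion defining $R_1$. For density of $R_2$ the roles are reversed: since the quasitiling is itself a factor of $Y$, prescribing each $X$-symbol as a function of the $Y$-block on its tile (chosen to match, empirically on the large tile, the conditional law of $X$ given $Y$ under $\xi$) makes $X$ an exact factor of $Y$, placing the re-coded joining in $R_2$ while keeping it close to $\xi$.

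The main obstacle I anticipate is precisely the marker construction underlying these re-codings: I must encode the quasitiling --- its shapes, the positions of tile centers, and the boundaries between tiles --- using only finitely many designated coordinates inside each tile, in a way that is itself a block code (so the re-coded point stays in a subshift and the procedure commutes with the $G$-action), while still leaving enough unconstrained coordinates to carry the injective, statistics-respecting code between typical $X$- and $Y$-blocks. Reconciling these two competing demands, and at the same time controlling the joining metric via Corollary \ref{cor:measure_approximation}, is where the $\Z$-case argument must be genuinely upgraded, because here the tiles have several shapes and irregular overlaps rather than being intervals; by comparison the remaining bookkeeping --- truncating $Q$, counting typical blocks through Lemma \ref{lem:SMB for subsets}, and verifying the $G_\delta$ structure --- is routine.
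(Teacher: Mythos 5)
Your outline reproduces the paper's strategy in broad strokes, but there is a genuine gap in the choice of Baire space, and it is fatal as written. You work in $\mathcal{J}=\{\xi:\xi \text{ invariant},\ \xi^Y=\nu\}$ and claim $R_1=\{\xi: Q\subset\bigvee_{g\in G}gP \bmod\xi\}$ is dense there. But any $\xi\in R_1$ satisfies $\CQ\subset\CP\bmod\xi$ and hence $h_{\xi^X}\geq h_\nu$; since entropy of the $X$-marginal is an infimum of weak-$*$ continuous functions $\xi\mapsto\frac{1}{|F|}H_{\xi^X}(P^F)$ (the paper uses exactly this formula in Lemma \ref{lem:higher entropy}), the set $\{\xi\in\mathcal{J}: h_{\xi^X}\geq h_\nu\}$ is \emph{closed}, and it is a proper subset of $\mathcal{J}$ whenever $h_\nu>0$ (e.g.\ $\delta_{1^G}\times\nu$ has $X$-marginal of entropy $0$). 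So $R_1$ is contained in a proper closed set and cannot be dense in $\mathcal{J}$. Lemma \ref{lem:higher entropy} does not rescue this: a perturbation of size $2\epsilon$ only raises the marginal entropy to about $h_{\xi^X}+\epsilon(\log s-h_{\xi^X})$, which for small $\epsilon$ stays far below $h_\nu$ when the starting entropy is small. The paper's fix is structural: the Baire space is $\CM_0=\{\xi \text{ ergodic},\ \xi^Y=\nu,\ h_{\xi^X}\geq h_\nu\}$, with the entropy constraint (and ergodicity, which you also drop and which is needed both for the SMB/ergodic-theorem arguments and because ergodic joinings need not be dense among all joinings) built in from the start, together with the standing hypothesis $h_\nu<\log s$ \emph{strictly}; then Lemma \ref{lem:higher entropy} only ever has to close an arbitrarily small entropy gap, which costs an arbitrarily small perturbation.

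Two further points of comparison. First, the paper does not prove density of the two inclusions by two separate perturbations: a single, nearly invertible tile-by-tile recoding (the marriage-lemma dictionaries $\Phi_S$, made locally decodable by the marker blocks of Lemma \ref{lem:marker}) yields one joining $\overline{\xi}$ satisfying \emph{both} $H_{\overline{\xi}}(P|\CQ)<\eps_k$ and $H_{\overline{\xi}}(Q^{(l)}|\CP)<\eps_k$ simultaneously; this matters because your separate perturbation for $R_2$ (coding $X$ as an exact factor of $Y$) can itself destroy the entropy constraint $h_{\xi^X}\geq h_\nu$ and so need not stay in the space. Second, even in the correct space there is a delicate endgame you omit: the recoded $\overline{\xi}$ may have $h_{\overline{\xi}^X}$ slightly below $h_\nu$, and one must apply Lemma \ref{lem:higher entropy} once more with parameters tuned so that the repaired measure both regains $h_{\xi^X}\geq h_\nu$ and remains inside the open set $V_{k,l}$. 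Your identification of the marker construction as the main new difficulty versus the $\Z$-case, and your treatment of the $G_\delta$ structure via the truncations $Q^{(l)}$ and Lemma \ref{lem:inclusion_from_entropy}, are consistent with the paper.
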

\begin{thm}[The homomorphism theorem]\label{thm:sinai}
    If $(Y,\CB,\nu, G)$ is an ergodic dynamical system with finite entropy $h_\nu$, and $\mu$ is a Bernoulli measure on $(X,\CA)$ such that $h_\nu\geq h_\mu$, then there exists a homomorphism (factor map) from $(Y,\CB,\nu, G)$ onto $(X,\CA,\mu, G)$.
\end{thm}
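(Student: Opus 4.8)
The plan is to obtain the desired homomorphism as a point of a residual subset of a compact space of joinings, exactly in the spirit announced in the abstract. Let $J_\mu$ denote the set of all $G$-invariant probability measures $\xi$ on $Z=X\times Y$ with marginals $\xi^X=\mu$ and $\xi^Y=\nu$, i.e. the joinings of $\mu$ and $\nu$. This set is nonempty (it contains $\mu\times\nu$), convex, and closed in the weak-$*$ topology, hence a compact metric space and in particular a Baire space. Write $P$ and $Q$ for the finite partitions of $Z$ into the time-$e$ symbols of $X$ and of $Y$, let $\CB_Y=\bigvee_{g\in G}gQ$ be the $\sigma$-algebra of $Y$-coordinates, and use the joining metric $d$ with $\CB_n=\bigvee_{g\in F_n}g(P\vee Q)$. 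A joining $\xi\in J_\mu$ is the graph of a factor map $\phi\colon(Y,\nu)\to(X,\mu)$ precisely when $P\subseteq\CB_Y \bmod\xi$ (equivalently $H_\xi(P\mid\CB_Y)=0$): in that case the $X$-coordinates are equivariant measurable functions of $y$ and assemble into $\phi$ with $\phi_*\nu=\xi^X=\mu$. Thus it suffices to prove that $\CR=\{\xi\in J_\mu: P\subseteq\CB_Y\bmod\xi\}$ is residual, hence nonempty, in $J_\mu$.

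To present $\CR$ as a dense $G_\delta$, for $k\in\N$ put
\[
R_k=\Big\{\xi\in J_\mu:\ \exists\,m\ \text{with}\ P\subset_{1/k}\textstyle\bigvee_{g\in F_m}gQ\ \bmod\xi\Big\}.
\]
Since $\bigvee_{g\in F_m}gQ\uparrow\CB_Y$, continuity of conditional entropy along increasing $\sigma$-algebras together with Lemma \ref{lem:inclusion_from_entropy} gives $\bigcap_k R_k=\CR$. Each $R_k$ is \emph{open}: for a fixed $m$ the condition asks, for every atom $A\in P$, for a union $B$ of atoms of $\bigvee_{g\in F_m}gQ$ with $\xi(A\sd B)<1/k$; as $A$ and $B$ are finite unions of cylinders, the set $A\sd B$ is clopen and $\xi(A\sd B)$ is weak-$*$ continuous in $\xi$, so the strict inequality is open, and taking the (finite) intersection over $A\in P$, the (finite) union over candidate $B$, and the union over $m$ preserves openness. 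By the Baire category theorem it remains to show each $R_k$ is \emph{dense}.

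Density is the heart of the matter. Fix $\xi_0\in J_\mu$, $\epsilon=1/k$, and $\delta>0$; we must produce $\xi'\in J_\mu$ with $d(\xi',\xi_0)<\delta$ lying in $R_k$. By the Fact following the definition of $d$ it is enough to match finitely many cylinder values $\xi'(D)\approx\xi_0(D)$ for $D\in\CB_N$, with $N=N(\delta)$, while achieving $Y$-measurability of $P$ only at a much larger scale $F_m$, $m\gg N$. The construction is the symbolic filling described after Corollary \ref{cor:measure_approximation}: using the finite generator theorem to present $Y$ conveniently and Lemma \ref{lem:dyntil} to place a disjoint, $(1-\eta)$-covering dynamical tiling of a $\nu$-generic point by tiles that are large subsets of very late \Folner sets, I read off the $Y$-block on each tile and overwrite an $X$-block determined by that $Y$-block. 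The Shannon--McMillan--Breiman theorem (Theorem \ref{lem:SMB}, with Lemma \ref{lem:SMB for subsets} to pass from \Folner sets to tiles) gives about $2^{|T|h_\nu}$ typical $Y$-blocks per tile against about $2^{|T|h_\mu}$ typical $X$-blocks, so the hypothesis $h_\mu\leq h_\nu$ is exactly what supplies the room to encode, by a $Y$-measurable rule, an $X$-block whose $F_N$-window statistics reproduce the conditional law of $\xi_0$ while the overall $X$-marginal is close to $\mu$. Corollary \ref{cor:measure_approximation} then controls the orbit closure of the filled point, so any invariant measure it carries has $Y$-marginal $\nu$ and lies within $\delta$ of $\xi_0$, and the inclusion $P\subset_\epsilon\bigvee_{g\in F_m}gQ$ holds because each overwritten $X$-symbol depends only on $y$-coordinates in a bounded window.

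The genuinely delicate point, and the step I expect to be the main obstacle, is to simultaneously force the $X$-marginal to be \emph{exactly} the prescribed Bernoulli measure $\mu$ rather than a measure with merely the correct block frequencies, and to keep the joint $(X,Y)$ statistics within $\delta$ of $\xi_0$. This is where Bernoullicity enters through finite determinedness and the $\overline{d}$ metric of Ornstein--Weiss: a $Y$-measurable filling whose $F_N$-window distribution and per-tile entropy are close to those of $\mu$ is automatically $\overline{d}$-close to $\mu$, and an optimal $\overline{d}$-coupling corrects the filling to one with $X$-marginal exactly $\mu$ while moving the joining arbitrarily little in $d$. Finite determinedness is also what handles the boundary case $h_\mu=h_\nu$, where the encoding room degenerates and one must lean on the finite generator theorem to represent $Y$ at precisely the right entropy. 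Once each $R_k$ is shown to be dense, $\CR=\bigcap_k R_k$ is a dense $G_\delta$; any $\xi\in\CR$ yields the sought factor map $\phi\colon(Y,\nu)\to(X,\mu)$, proving Theorem \ref{thm:sinai}.
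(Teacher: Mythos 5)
Your overall architecture --- a Baire space of joinings, open sets cut out by the approximate inclusions $P\subset_{1/k}\bigvee_{g\in F_m}gQ$, and a density argument combining an encoding step with finite determinedness and a $\overline{d}$-coupling correction --- matches the paper's, and your openness argument and the identification of graph joinings with factor maps are fine. (One structural caveat: the paper works in the space $\CM_1$ of \emph{ergodic} joinings, whereas your $J_\mu$ contains non-ergodic ones, for which the generic-point and Shannon--McMillan--Breiman machinery in your density step is unavailable; as written you would owe an ergodic-decomposition argument, or you should simply restrict to ergodic joinings as the paper does.)

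The genuine gap is in the density step, exactly at the point you flag as delicate. You propose to redo the symbolic encoding directly, producing a $Y$-measurable filling whose $X$-marginal $\mu_1$ is weak-$*$ close to $\mu$ and whose per-tile entropy is close to that of $\mu$, and then to invoke finite determinedness. But finite determinedness requires the two-sided bound $\abs{h(\mu_1)-h(\mu)}<\delta$, and a many-to-one $Y$-measurable assignment of $X$-blocks gives no lower bound on $h(\mu_1)$; moreover, when $h_\mu=h_\nu$ the marriage-lemma margins that drive the encoding in the proof of Theorem \ref{thm:krieger_joinings} vanish (there the room comes from Lemma \ref{lem:higher entropy}, which pushes $h_{\xi^X}$ strictly above $h_\nu$ --- a move unavailable here because $\xi^X=\mu$ is pinned). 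The paper resolves both problems at once: it first reduces to the case $h_\mu=h_\nu$ by splitting states of $X$, and then, instead of re-running the encoding, it observes that $\CM_1\subseteq\CM_0$ and applies the already-proved Theorem \ref{thm:krieger_joinings} to obtain an \emph{isomorphism} joining $\xi_1$ near $\xi_0$; being an isomorphism forces $h(\xi_1^X)=h_\nu=h_\mu$ exactly, so finite determinedness applies to $\mu_1=\xi_1^X$ and yields a nearly diagonal ergodic coupling $\rho$ of $\mu$ and $\mu_1$. The correction you describe in a single clause is then carried out concretely by forming the relatively independent joining of $(X\times X,\rho)$ and $(X\times Y,\xi_1)$ over the common factor $(X,\mu_1)$ and projecting to the outer coordinates; the diagonal concentration of $\rho$ is what guarantees both $d(\widetilde{\xi},\xi_1)<\eps/2$ and the survival of the inclusion $P\subset_{1/k}\CQ\text{ mod }\widetilde{\xi}$. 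Without the reduction to equal entropies, the appeal to the generator theorem to pin down the entropy of the $X$-marginal, and the relatively independent joining, your density argument does not close.
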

\begin{thm}[The isomorphism theorem]
    If $(Y,\CB,\nu, G)$ and $(X,\CA,\mu, G)$ both have Bernoulli measures of equal entropy, then they are isomorphic to each other (in this case we assume that the alphabet of $Y$ is finite rather than countable).
\end{thm}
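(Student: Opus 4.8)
The plan is to produce the isomorphism as a single joining of $\mu$ and $\nu$ that simultaneously exhibits each system as a factor of the other, exploiting the fact that the proof of the homomorphism theorem (Theorem~\ref{thm:sinai}) is really a Baire-category argument yielding a \emph{residual} set of factor joinings, not merely one factor map. In the present situation both $\mu$ and $\nu$ are Bernoulli, $h_\mu=h_\nu$, and the alphabet of $Y$ is finite, so the hypotheses of Theorem~\ref{thm:sinai} hold with \emph{either} system as source and the other as target. First I would fix the arena: let $J(\mu,\nu)$ be the set of all $G$-invariant probability measures $\xi$ on $Z=X\times Y$ with marginals $\mu$ and $\nu$. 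With the weak-$*$ metric $d$ introduced earlier, $J(\mu,\nu)$ is weak-$*$ closed inside the compact space of invariant measures on $Z$, hence compact, and it is nonempty (it contains $\mu\times\nu$); as a compact metric space it is completely metrizable and therefore Baire. Let $P$ and $Q$ be the finite partitions of $Z$ into $X$-symbols and $Y$-symbols at the identity, so that $\CA$ and $\CB$ are generated by $\set{gP:g\in G}$ and $\set{gQ:g\in G}$ respectively.

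Next I would read off two residual sets inside this single space. That $X$ is a $\xi$-factor of $Y$ means $P$ is $\xi$-measurable over $\CB$, which in terms of the approximate inclusion $\subset_{\eps}$ reads
\[
\xi\in\mathcal{G}_{Y\to X}:=\bigcap_{k\in\N}\ \bigcup_{F}\ \set{\xi: P\subset_{1/k}\textstyle\bigvee_{g\in F}gQ \bmod\xi},
\]
where $F$ ranges over finite subsets of $G$. For fixed $F,k$ this inner set is open: each atom of $P$ and each union $B$ of atoms of $\bigvee_{g\in F}gQ$ is a cylinder, so $\xi\mapsto\xi(A\symdiff B)$ is weak-$*$ continuous, and the condition is a finite intersection (over atoms $A\in P$) of finite unions (over admissible $B$) of the open sets $\set{\xi:\xi(A\symdiff B)<1/k}$. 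Hence $\mathcal{G}_{Y\to X}$ is $G_\delta$, and Theorem~\ref{thm:sinai}, applied with source $Y$ and target the Bernoulli system $(X,\mu)$ (legitimate since $h_\mu\leq h_\nu$), supplies its \emph{density}, making it residual. Running the identical argument with the roles of $X$ and $Y$ interchanged — valid because $\nu$ is Bernoulli and $h_\nu\leq h_\mu$ — shows that the symmetric set $\mathcal{G}_{X\to Y}$, defined by $Q\subset_{1/k}\bigvee_{g\in F}gP\bmod\xi$, is likewise residual in $J(\mu,\nu)$.

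Finally I would intersect. Since $J(\mu,\nu)$ is Baire, $\mathcal{G}_{Y\to X}\cap\mathcal{G}_{X\to Y}$ is residual and in particular nonempty; fix any $\xi$ in it. Then $\CA\subset\CB$ and $\CB\subset\CA$ modulo $\xi$, so $\CA=\CB\bmod\xi$ and the two coordinate $\sigma$-algebras coincide $\xi$-almost surely. Consequently $\xi$ is carried by the graph of an invertible, $G$-equivariant, measure-preserving map between $(X,\CA,\mu,G)$ and $(Y,\CB,\nu,G)$, that is, an isomorphism, which is exactly the assertion.

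The main obstacle is not this final assembly — it is the ``trivial consequence'' advertised in the introduction — but the verification, already carried out inside Theorem~\ref{thm:sinai}, that the factor joinings are \emph{dense} in the full joining space; that density, in both directions, is the genuine content, and it is guaranteed here by the Bernoulli hypothesis together with the entropy equality. A minor point to confirm along the way is that the $G_\delta$ description above really captures $\xi$-measurability of one coordinate over the other, which holds because, for symbolic $\CB$ generated by the finite partition $Q$ under the $G$-action, a coordinate is $\CB$-measurable mod $\xi$ precisely when it is approximated to within every $\eps$ by some $\bigvee_{g\in F}gQ$. Granting these two inputs, the Baire category theorem closes the argument at once.
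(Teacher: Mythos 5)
Your argument is, in substance, the paper's own: the paper derives the isomorphism theorem by applying the residual form of the homomorphism theorem (Theorem~\ref{thm:sinai}) twice, with the roles of $X$ and $Y$ interchanged, and intersecting the two residual sets inside a single Baire space of joinings; any $\xi$ in the intersection satisfies $\CP=\CQ \bmod \xi$ and is therefore carried by the graph of an isomorphism. The one point you should repair is the choice of arena. You work in $J(\mu,\nu)$, the set of \emph{all} invariant joinings, but Theorem~\ref{thm:sinai} asserts density of the factor joinings only in $\CM_1$, the set of \emph{ergodic} joinings of $\mu$ and $\nu$. Ergodic joinings are in general not dense in the set of all joinings (for two copies of an irrational rotation the ergodic joinings are exactly the off-diagonal graph joinings, which form a closed set that omits, e.g., the average of two distinct graph joinings), so the density you invoke does not follow from Theorem~\ref{thm:sinai} for your $J(\mu,\nu)$; it happens to be true for two Bernoulli systems, but that is a separate fact not established in the paper. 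The fix is immediate: replace $J(\mu,\nu)$ by $\CM_1$, which the paper has already observed to be a nonempty Baire subset of $\CM$ (so countable intersections of relatively open dense subsets are dense), and your $G_\delta$ description and intersection argument then go through verbatim. Note also that any $\xi$ in the resulting intersection is automatically ergodic, being the push-forward of $\nu$ under $y\mapsto(\phi(y),y)$ for the induced equivariant map $\phi$, so no ergodicity is lost by this restriction.
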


Before we rephrase these theorems in terms of joinings, we need to define several auxiliary objects in the product space $Z=X\times Y$ with the product $\sigma$-algebra $\CC$.

For $i=1,2,\ldots, s$, let $P_i$ be the subset of $Z$ defined as
\[\label{partitions_def}
P_i = \{z=(x,y) \in Z: x_e = i\}.
\]
Similarly, for any $j\geq 1$, let
\[
    Q_j = \{z=(x,y) \in Z: y_e=j\},\]
    and for any $l\geq 1$ let
    \[\ Q_l^{(l)} = \{z=(x,y) \in Z: y_e\geq l\}.
\]
We can now define the following partitions of $X\times Y$: let $P=\set{P_1,P_2,\ldots P_s}$, let $Q^{(l)} = \{Q_1, \ldots, Q_{l-1}, Q_l^{(l)}\}$, and let $Q = \{Q_1,Q_2,\ldots\}$. Moreover, we denote by $\CP$ the $\sigma$-algebra generated by all $gP$ for $g \in G$, and we define $\CQ^{(l)}$  and $\CQ$ analogously. Observe that $\CP = \CA \times \set{\emptyset, Y}$, and $\CQ = \set{\emptyset, X}  \times \CB$, thus $(Z,\CP)$ and $(Z,\CQ)$ are isomorphic to $(X,\CA)$ and $(Y,\CB)$, respectively.

Note that if $\mu$ and $\nu$ are ergodic $G$-invariant measures on $\CA$ and $\CB$ respectively, then any isomorphism $\phi:Y\to X$ between $(Y,\CB,\nu, G)$ and $(X,\CA,\mu, G)$ gives rise to an ergodic joining $\xi$ of $\mu$ and $\nu$, supported by the graph of $\phi$. Conversely, any joining $\xi$ of $\mu$ and $\nu$ such that $\CP = \CQ \mod \xi$ is induced by an isomorphism between $X$ and $Y$. An analogous statement is true for homomorphisms, with the equality modulo $\xi$ replaced by the inclusion $\CP \subset \CQ \mod \xi$. If $\xi$ is a joining induced by an isomorphism (homomorphism), we will sometimes refer to $\xi$ itself as an isomorphism (homomorphism).

We will now define the two Baire spaces of fundamental importance to our construction: denote the space of all probability measures on $(Z,\CC)$ by $\CM$, then let
\[
    \CM_0 = \{\xi \in \CM : \xi \text{ is invariant and ergodic, } \xi^Y = \nu,\, h_{\xi^X}(X) \geq h_{\nu}(Y)\};
\]
and for a fixed ergodic measure $\mu$ on $(X,\CA,G)$, define
\[
    \CM_1 = \{ \xi\in \CM : \xi \text{ is invariant and ergodic, } \xi^Y=\nu,\,\xi^X = \mu\}.
\]
It is easily verified that these sets are nonempty, since any ergodic component of the product measure belongs to both of them. In addition, they are both Baire subsets of $\CM$ (which is itself a compact metrizable space), i.e. they have the Baire property that countable intersections of (relatively) open dense subsets are dense (and hence nonempty). We can now restate our three theorems as follows:

\begin{thm}[The finite generator theorem]\label{thm:krieger_joinings}
    If $h_{\nu}(Y) < \log s$, then
    \[
        \CM_0^{*} = \{\xi \in \CM_0: \xi \text{ is an isomorphism}\}
    \]
    is a countable intersection of dense open subsets of $\CM_0$.
\end{thm}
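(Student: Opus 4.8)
The plan is to rewrite the isomorphism condition $\CP=\CQ \bmod\xi$ as the conjunction of the two inclusions $\CP\subset\CQ$ and $\CQ\subset\CP$, and to exhibit each as a countable intersection of relatively open dense subsets of $\CM_0$. Since $\CQ$ is a $G$-invariant $\sigma$-algebra generated by the single partition $Q$ (and $\CP$ by $P$), the inclusion $\CP\subset\CQ\bmod\xi$ is equivalent to $P\subset\CQ\bmod\xi$, hence to the requirement that for every $k$ there exist a finite $F\subset G$ and a level $l$ with $P\subset_{1/k}\bigvee_{g\in F}gQ^{(l)}\bmod\xi$; symmetrically $\CQ\subset\CP$ unwinds to $Q^{(m)}\subset_{1/k}\bigvee_{g\in F}gP\bmod\xi$ for all $k,m$. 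Thus
\[
\CM_0^{*}=\bigcap_{k}\Big(\bigcup_{F,l}U^{A}_{k,F,l}\Big)\cap\bigcap_{k,m}\Big(\bigcup_{F}U^{B}_{k,m,F}\Big),
\]
where $U^{A}_{k,F,l}=\{\xi\in\CM_0:P\subset_{1/k}\bigvee_{g\in F}gQ^{(l)}\}$ and $U^{B}$ is defined analogously. Each such set is open in $\CM_0$: it asks, for each atom of a fixed finite partition, for one of finitely many unions of cylinders whose symmetric difference with that atom has measure $<1/k$, and cylinder measures depend continuously on $\xi$ (by the Fact following the definition of the metric). Hence $\CM_0^{*}$ is a $G_\delta$, and the entire content is the density of each of these open sets.

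For density I would use a single re-coding construction built on the dynamical quasitilings of Lemma \ref{lem:dyntil}. Given $\xi\in\CM_0$ and a target open set, fix a disjoint $(1-\eta)$-covering quasitiling $\CT$ whose tiles are large subsets of very late \Folner sets, and leave the $Y$-coordinate untouched (so the $Y$-marginal stays exactly $\nu$). On each tile $T$ I would overwrite the $X$-name $x(T)$ by a new name $x'(T)$ produced by a coding rule that depends only on the data $(x,y)$ in a bounded neighbourhood of $T$ and uses a marker block occupying a negligible fraction of $T$ to record the shape and position of the tile, so that $x'=\Psi(x,y)$ is a genuine $G$-equivariant Borel (block) code and $\xi'=(\Psi,\id)_{*}\xi$ is an invariant joining with $\xi'^Y=\nu$. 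The Shannon--McMillan--Breiman Theorem \ref{lem:SMB}, together with Lemma \ref{lem:SMB for subsets} to descend to subtiles, guarantees that on a typical tile the names $x(T)$ and $y(T)$ range over roughly $2^{|T|h_{\xi^X}}$ and $2^{|T|h_\nu}$ values; combined with the strict gap $h_\nu<\log s$, i.e.\ $s^{|T|}=2^{|T|\log s}$ available $X$-names, this leaves ample room to design the coding combinatorially.

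The two families are handled by two choices of coding rule. For $U^{B}$ (making $Y$ a factor of $X$) I would encode $y(T)$ injectively into $x'(T)$, so that $y(T)$, and hence $Q^{(m)}$ at the central coordinates, is recoverable from finitely many coordinates of $x'$; the entropy constraint is then automatic, since $Y$ being a factor of $X$ forces $h_{\xi'^X}\geq h_{\xi'^Y}=h_\nu$. For $U^{A}$ (making $X$ a factor of $Y$) I would instead let $x'(T)$ be a function of $y(T)$ alone, chosen \emph{injectively} on the typical $y$-names (again possible because $\log s>h_\nu$); injectivity prevents any loss of entropy, so $h_{\xi'^X}=h_\nu\geq h_\nu$ keeps $\xi'$ inside $\CM_0$, while $x_e$ becomes $\CQ$-measurable up to the tiling defect $\eta$. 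In both cases closeness $d(\xi',\xi)<\eps$ is obtained not coordinatewise but by choosing the coding so that the empirical joint frequencies of short $(x',y)$-blocks on each tile reproduce the corresponding $\xi$-measures (a matching/assignment argument, feasible thanks to the same counting slack), and Corollary \ref{cor:measure_approximation} controls the distance; Lemma \ref{lem:inclusion_from_entropy} lets me certify the approximate inclusions by making the relevant conditional entropies small. Ergodicity of $\xi'$ is arranged by realizing it as a factor of an ergodic system, passing if necessary to a suitable ergodic component, and Lemma \ref{lem:higher entropy} provides additional slack in $h_{\xi^X}$ whenever the construction needs it.

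The step I expect to be the main obstacle is precisely this density construction: simultaneously (a) building the coding as an honest equivariant block code over a quasitiling, which in the amenable setting forces the marker-block bookkeeping flagged in the introduction and lacks any clean analogue of the $\mathbb{Z}$-Rokhlin tower; (b) matching the short-block joint statistics tightly enough to secure $d(\xi',\xi)<\eps$; and (c) keeping the entropy lower bound $h_{\xi'^X}\geq h_\nu$ intact, which for the $\CP\subset\CQ$ family hinges delicately on the injectivity of the code and the strictness of $h_\nu<\log s$.
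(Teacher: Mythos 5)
Your proposal follows essentially the same route as the paper: the same $G_\delta$ decomposition via the open conditions $P\subset_{1/k}\bigvee_{g\in F}gQ^{(l)}$ (and its mirror), and the same density argument built from a dynamical quasitiling of $Y$, marker blocks, Shannon--McMillan--Breiman counting, a marriage-type injective matching of typical $Y$-blocks to $X$-blocks that preserves the joint statistics, Lemma~\ref{lem:inclusion_from_entropy} to certify the inclusions, and Lemma~\ref{lem:higher entropy} to repair the entropy marginal. The only cosmetic differences are that your two separate codings for the two inclusion families collapse into the paper's single code (a function of $y$ that is injective on typical blocks), and that the matching count really hinges on first boosting $h_{\xi^X}$ strictly above $h_\nu$ via Lemma~\ref{lem:higher entropy} (which $h_\nu<\log s$ makes possible) rather than on the raw alphabet slack $s^{|T|}$.
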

\begin{thm}[The homomorphism theorem]
    If $h_{\nu}(Y) = h_{\mu}(X)$ and $\mu$ is Bernoulli, then
    \[
        \CM_1^{*} = \{\xi \in \CM_1: \xi \text{ is an homomorphism}\}
    \]
    is a countable intersection of dense open subsets of $\CM_1$.
\end{thm}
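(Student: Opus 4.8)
The plan is to realize $\CM_1^{*}$ as a countable intersection of dense open sets and then invoke the Baire property of $\CM_1$. For the $G_\delta$ structure, fix positive integers $k,m$ and set
\[
U_{k,m}=\Big\{\xi\in\CM_1:\ \text{for each } i\leq s \text{ there is a union } B_i \text{ of atoms of } \textstyle\bigvee_{g\in F_m} gQ^{(m)} \text{ with } \xi(P_i\sd B_i)<\tfrac1k\Big\}.
\]
Since the atoms of $\bigvee_{g\in F_m}gQ^{(m)}$ and the sets $P_i$ are cylinders, the maps $\xi\mapsto\xi(P_i\sd B_i)$ are continuous on $\CM$, so each $U_{k,m}$ is relatively open in $\CM_1$, and hence so is $U_k=\bigcup_m U_{k,m}$. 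Because the algebras $\sigma(\bigvee_{g\in F_m}gQ^{(m)})$ increase to $\CQ$, a joining $\xi$ satisfies $\CP\subset\CQ\mod\xi$ if and only if every $P_i$ is $\xi$-approximable in measure by $\CQ$-sets, i.e. if and only if $\xi\in\bigcap_k U_k$. Thus $\CM_1^{*}=\bigcap_k U_k$, and it remains to prove that each $U_k$ is dense.

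For density, fix $\xi_0\in\CM_1$ and $\eps>0$; I want an ergodic joining $\xi\in U_k$ with $d(\xi,\xi_0)<\eps$. By Lemma \ref{lem:inclusion_from_entropy} it suffices to produce such a $\xi$ for which $H_\xi\!\big(P\mid\bigvee_{g\in F_m}gQ^{(m)}\big)$ is small for a suitable $m$; equivalently, one for which the $X$-coordinate is, up to small error, a function of finitely many $Y$-coordinates. I would construct $\xi$ by the tile-coding method already used for the finite generator theorem (Theorem \ref{thm:krieger_joinings}): apply Lemma \ref{lem:dyntil} to the $Y$-system to obtain a disjoint, $(1-\eta)$-covering quasitiling whose tiles are large subsets of late \Folner sets, read the $Y$-name $y(T)$ on each tile $T$, and paint on $T$ an $X$-name $x(T)=\phi_T(y(T))$ depending only on $y(T)$, so that the resulting point has its $X$-coordinate determined by $Y$ and the inclusion $\CP\subset\CQ$ holds up to the covering error $\eta$. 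Invariant measures on the orbit closure of the resulting element are then controlled by Corollary \ref{cor:measure_approximation} and the subset versions of the ergodic and Shannon--McMillan--Breiman theorems (Lemmas \ref{lem: PET for subsets} and \ref{lem:SMB for subsets}).

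The heart of the matter is the choice of the codes $\phi_T$, and this is where the hypotheses that $\mu$ is Bernoulli and $h_\mu=h_\nu$ enter. On a tile $T$ the typical $Y$-names number about $2^{|T|h_\nu}$ and the typical $X$-names for $\mu$ number about $2^{|T|h_\mu}=2^{|T|h_\nu}$, while for a fixed typical $y(T)$ the $\xi_0$-conditionally typical $X$-names number about $2^{|T|H_{\xi_0}(\CP\mid\CQ)}$. I would choose $\phi_T$ to be a near-matching that sends each typical $Y$-name to a $\xi_0$-conditionally typical $X$-name and, simultaneously, pushes the $\nu$-distribution of $Y$-names forward to approximately the $\mu$-distribution of $X$-names; existence of such a matching follows from a Hall/max-flow argument, the two typicality counts being essentially equal and the conditional sets being large. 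The first property keeps the joint law close to $\xi_0$ (hence $d(\xi,\xi_0)<\eps$), while the second makes the $X$-marginal close to $\mu$ in distribution and in entropy. Finite determinedness of the Bernoulli measure $\mu$ then promotes this approximate agreement to $\overline d$-closeness, and a final $\overline d$-small modification (absorbed through another joining) corrects the $X$-marginal to be exactly $\mu$ without destroying either the closeness to $\xi_0$ or the fact that $X$ is a function of $Y$; passing to an ergodic component places the result in $U_k\cap\CM_1$.

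The main obstacle is precisely this matching step in the critical equal-entropy regime: because $h_\mu=h_\nu$ there is no entropy surplus to absorb errors, so the two simultaneous requirements — respecting $\xi_0$-conditional typicality (for closeness to $\xi_0$) and reproducing the $\mu$-distribution with full entropy (for the correct marginal) — are tight, and reconciling them is exactly what finite determinedness of $\mu$, together with the $\overline d$ metric, is designed to do. Controlling the $X$-marginal on unions of tiles rather than on honest \Folner sets, and ensuring ergodicity of the limiting measure, are the remaining technical points, handled by the quasitiling estimates of Lemmas \ref{lem:dyntil}, \ref{lem: PET for subsets} and \ref{lem:SMB for subsets} as in the proof of Theorem \ref{thm:krieger_joinings}.
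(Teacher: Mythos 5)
Your $G_\delta$ decomposition and your end-game (finite determinedness of $\mu$, the $\overline d$ metric, and a relatively independent joining to correct the $X$-marginal) match the paper's, but the middle of your argument has a genuine gap: the single matching $\phi_T$ you ask for need not exist. You want $\phi_T$ to send each $\nu$-typical $Y$-name on a tile $T$ to a $\xi_0$-conditionally typical $X$-name \emph{and} to push the $\nu$-distribution of names forward to approximately the $\mu$-distribution; the latter forces $\phi_T$ to be essentially injective on a large-measure set (otherwise the pushforward has entropy strictly below $h_\mu$ and finite determinedness cannot be invoked), while the former confines the image of each $Y$-name to a set of only about $2^{|T|(h(\xi_0)-h_\nu)}$ names. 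If $\xi_0$ is already close to a graph joining these conditional sets are \emph{small}, not large, and in the equal-entropy regime the in-degrees and out-degrees of the bipartite typicality graph agree only up to factors $2^{\pm O(\delta|T|)}$, which can violate the Hall-type condition in the wrong direction. This is exactly the slack problem that the proof of Theorem \ref{thm:krieger_joinings} resolves with the entropy-boosting Lemma \ref{lem:higher entropy} --- but boosting is unavailable to you here, because raising $h(\xi^X)$ above $h_\mu$ is incompatible with keeping the $X$-marginal (close to) $\mu$. Saying that finite determinedness ``is designed to'' reconcile the two requirements does not repair this: finite determinedness compares two measures on $X$ after the construction, it does not produce the matching.

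The paper avoids re-running the coding construction altogether. Since $\CM_1\subset\CM_0$, Theorem \ref{thm:krieger_joinings} already supplies an \emph{isomorphism} joining $\xi_1$ arbitrarily weak-$*$ close to $\xi_0$, whose $X$-marginal $\mu_1$ is some ergodic measure of entropy exactly $h_\nu=h_\mu$, close to $\mu$ in distribution but not equal to it. Finite determinedness then gives $\overline d(\mu_1,\mu)$ small, hence an ergodic joining $\rho$ of $\mu$ and $\mu_1$ concentrated near the diagonal, and the relatively independent joining of $(X\times X,\rho)$ and $(X\times Y,\xi_1)$ over the common factor $(X,\mu_1)$, projected to the outer coordinates, yields an ergodic $\widetilde\xi$ with marginals $\mu$ and $\nu$, close to $\xi_0$, for which $P\subset_{1/n}\CB\bmod\widetilde\xi$. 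If you replace your matching step by this appeal to the generator theorem, the remainder of your outline goes through essentially as you wrote it.
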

Note that it is sufficient to prove the homomorphism theorem for the case of equal entropies: indeed, if  $h_\nu(Y)>h_\mu(X)$ then we can appropriately split the states of $X$ to obtain another Bernoulli system $X'$ of the same entropy as $Y$; the original $X$ is then clearly a homomorphic image of $X'$, all we need is prove the above theorem with $X$ replaced by such an $X'$.

\begin{thm}[The isomorphism theorem]
    If $\mu$ and $\nu$ are Bernoulli and $h_{\nu}(Y) = h_{\mu}(X)$, then
    \[
        \CM_2^{*} = \{\xi \in \CM_1: \xi \text{ is an isomorphism}\}
    \]
    is a countable intersection of dense open subsets of $\CM_1$.
\end{thm}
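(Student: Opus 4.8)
The plan is to deduce the isomorphism theorem directly from the homomorphism theorem, exploiting the fact that under the present hypotheses the roles of $X$ and $Y$ are completely symmetric. Recall that a joining $\xi\in\CM_1$ is an isomorphism precisely when $\CP=\CQ\mod\xi$, which is to say that the two inclusions $\CP\subset\CQ\mod\xi$ and $\CQ\subset\CP\mod\xi$ hold simultaneously. Consequently
\[
\CM_2^{*}=\{\xi\in\CM_1:\CP\subset\CQ\mod\xi\}\cap\{\xi\in\CM_1:\CQ\subset\CP\mod\xi\},
\]
and it suffices to show that each of the two sets on the right is a countable intersection of dense open subsets of $\CM_1$.

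The first set is exactly $\CM_1^{*}$, the set of homomorphisms, so by the homomorphism theorem it is a countable intersection of dense open subsets of $\CM_1$ (here we use that $\mu$ is Bernoulli and $h_\mu(X)=h_\nu(Y)$). For the second set I would apply the homomorphism theorem a second time, with the roles of $X$ and $Y$ interchanged. This is legitimate precisely because in the isomorphism theorem we assume that the alphabet of $Y$ is finite: under this assumption $(Y,\CB,\nu,G)$ is a finite-alphabet Bernoulli system whose entropy $h_\nu$ equals $h_\mu$ and is automatically at most the logarithm of its alphabet size, so $Y$ satisfies all the standing hypotheses imposed on $X$ in the main set-up. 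Moreover $\CM_1$ is manifestly symmetric under the coordinate swap $X\leftrightarrow Y$, since both marginals are pinned (to $\mu$ and $\nu$ respectively); hence the homomorphism theorem applied to the swapped system yields that $\{\xi\in\CM_1:\CQ\subset\CP\mod\xi\}$, the set of joinings induced by factor maps from $X$ onto $Y$, is again a countable intersection of dense open subsets of $\CM_1$.

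Finally I would combine the two conclusions by elementary Baire bookkeeping. If the first set equals $\bigcap_n U_n$ and the second equals $\bigcap_n V_n$ with every $U_n,V_n$ dense and open in $\CM_1$, then $\CM_2^{*}$ is the intersection of the single countable family $\{U_n\}\cup\{V_n\}$ of dense open sets, and is therefore itself a countable intersection of dense open subsets of $\CM_1$, as required. Since $\CM_1$ is a Baire space, this set is in particular dense and nonempty, so genuine isomorphisms exist.

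I expect no serious obstacle here: essentially the whole content of the isomorphism theorem has already been absorbed into the homomorphism theorem, mirroring the classical fact that Ornstein's theorem is a formal consequence of Sinai's. The only point demanding care is the verification that the hypotheses of the homomorphism theorem survive the swap $X\leftrightarrow Y$, namely that the finiteness of the alphabet of $Y$ (assumed only in this theorem) together with $\nu$ being Bernoulli of entropy equal to $h_\mu$ puts $Y$ on exactly the same footing as $X$. Once this symmetry is recorded, the result is immediate.
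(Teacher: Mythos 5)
Your proposal is correct and follows essentially the same route as the paper, which also derives the isomorphism theorem by applying the homomorphism theorem twice with the roles of $X$ and $Y$ interchanged and intersecting the two resulting countable families of dense open sets in $\CM_1$. Your explicit check that the finite-alphabet Bernoulli hypotheses on $Y$ make the swap legitimate, and that $\CP\subset\CQ$ together with $\CQ\subset\CP$ modulo $\xi$ yields $\CP=\CQ$ modulo $\xi$, matches the paper's (terser) argument exactly.
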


Observe that Theorem 3.6 is an immediate corollary of Theorem 3.5: applying the latter twice, with the roles of $X$ and $Y$ interchanged, we obtain two sets whose intersection has to be nonempty. Any $\xi$ in such an intersection corresponds to an (almost everywhere) invertible map between $X$ and $Y$, which is a homomorphism in both directions, and thus is an isomorphism. Therefore, we only need to prove the first two theorems.

\subsection*{Proof of the finite generator theorem}\phantom{ }

Our goal is to represent $\CM_0^{*}$ as the intersection of a countable collection of dense open subsets of $\CM_0$. For fixed $k,l\geq 1$, let $V_{k,l}$ be the set of all $\xi \in \CM_0$ such that
\[
    \CQ^{(l)} \subseteq_{\frac{1}{k}} \CP \text{ mod } \xi  \text{   \  and\ \ }  P \subseteq_{\frac{1}{k}} \CQ \text{ mod } \xi.
\]

Note that
\[ 
    P \subset_{\frac{1}{k}} \mathcal{Q} \text{ mod } \xi 
\]

if and only if for some $n$ and $l$ we have
\[
    P \subset_{\frac{1}{k}} \bigvee_{g\in F_n} gQ^{(l)} \text{ mod } \xi,
\]
which is an open condition (involving a finite number of strict inequalities between measures of sets). A similar argument applies to the other inclusion, and so it follows that $V_{k,l}$ is an open set. Furthermore, if $\xi \in \bigcap_{k,l} V_{k,l}$ then 
\[
     P \subseteq \CQ \text{ mod } \xi \text{   \  and\ \ } Q \subseteq \CP \text{ mod } \xi,
\] 
therefore $\xi$ is an isomorphism. Ultimately,
\[
    \bigcap V_{k,l} = \CM_0^{*}.
\]

It remains to show that each $V_{k,l}$ is dense in $\CM_0$. Thus, we fix some $\xi \in\CM_0$, and recall that
\[
    \xi^Y = \nu,\ h_{\xi^X}(X) \geq h_{\nu}(Y),
\]
and also $\xi$ is ergodic under the product group action. For any given $k,l$ we need to prove that there exists a $\widetilde{\xi}\in V_{k,l}$ as close as we want to $\xi$. Since it is easy to see that the sets $V_{k,l}$ are decreasing in $k$ and $l$, we may assume that $k$ and $l$ are arbitrarily large. Now, it suffices to show that for every $n_0\in\mathbb{N}$ and $\epsilon>0$, we can find a $\widetilde{\xi}\in V_{k,l}$ such that for any atom $A$ of $\bigvee_{g\in F_{n_0}}g(P\vee Q)$ we have:
\[
    |\xi(A)-\widetilde{\xi}(A)| < \epsilon.
\]
We will construct such a $\widetilde{\xi}$ directly, over a number of intermediate steps:
\subsubsection*{Step 1}
By lemma \ref{lem:inclusion_from_entropy}, there exists an $\eps_k$ such that if $H_{\overline{\xi}}(P|\CQ)<\eps_k$ and $H_{\overline{\xi}}(Q^{(l)}|\CP)<\eps_k$, then $\overline{\xi}\in V_{k,l}$. Since we now work with a fixed $l$, and all measure properties with respect to $\CQ$ need only be verified with respect to $\CQ^{(l)}$, we will simplify the notation by writing $Q$ and $\CQ$ instead of $Q^{(l)}$ and $\CQ^{(l)}$, respectively (implicitly treating $Y$ as a system over a finite alphabet ${1,2,\ldots,l}$).
\subsubsection*{Step 2}
By lemma \ref{lem:higher entropy}, we can find a joining $\xi_1$ in $\CM_0$ which is arbitrarily close to $\xi$, but such that $h_{\xi_1^X}>h_{\xi^X}\geq h_{\nu}$. Therefore we can without loss of generality assume that $h_{\xi^X}>h_{\xi^Y}=h_\nu$.

\subsubsection*{Step 3} Let $d = h_{\xi^X}(X) - h_{\xi^Y}(Y)$. It follows from step 2 that $d>0$. A combination of lemma \ref{lem:SMB for subsets} (the corollary of the Shannon-McMillan-Breiman theorem), and lemma \ref{lem:dyntil} (the existence of quasitilings whose shapes are arbitrarily large subsets of arbitrarily large F\o lner sets) implies that for every $y\in Y$ we can obtain a disjoint quasitiling $\CT_y$ such that $\set{\CT_y:y\in Y}$ is a factor of $Y$, and the shapes of the quasitilings are large enough to apply the SMB theorem in $X$, $Y$ and $X\times Y$ with arbitrary accuracy at the same time. Specifically, we can require that every shape $S$ of any $\CT_y$ satisfies the following (for an arbitrarily small $\delta$, which we specify later, and $\delta'$ chosen so that corollary \ref{cor:measure_approximation} is satisfied for $\xi$ with $\frac{\eps}{2}$ in place of $\eps$):
%\textbf{*** UWAGA O UTOŻSAMIANIU BLOKÓW W X, Y z blokami w Z***}\\
%\textbf{*** I W OGÓLE ROZPISAĆ CO TU SIĘ DZIEJE, ZWŁASZCZA CO TO JEST $h$***}
\begin{enumerate}
\label{inequalities}
    \item the set of blocks $B$ in $Y$, with domain $S$, such that 
        \[
            \xi^Y(B) \geq 2^{-|S|(h_{\xi^Y} + \delta)},
        \]
         has total joint measure at least $1-\delta$ (denote the set of these blocks as $\mathbf{B}$),
        
    \item the set of blocks $A'$ in $X$, with domain $S$, such that 
        \[
            \xi^X(A') \leq 2^{-|S|(h_{\xi^X} - \delta)},
        \]
        has total joint measure at least $1-\delta$ (denote the set of these blocks as $\mathbf{A}'$),

    \item the set of blocks $A'\times B$ in $X\times Y$, with domain $S$, such that $d(A'\times B,\xi)<\delta'$, and
    \[
            2^{-|S|(h+\delta)} \leq \xi(A'\times B) \leq 2^{-|S|(h-\delta)},
        \]where $h=h(\xi)$, has total joint measure at least $1-\delta$.
                \end{enumerate}
In addition to any other requirements, we can without loss of generality assume that $\delta < \frac{\epsilon}{18|F_{n_0}|}$, and $\delta<\frac{d}{12}$.

The above conditions will be satisfied as long as the parameters $\eta$ and $N$ in lemma \ref{lem:dyntil} are chosen sufficiently small and sufficiently large, respectively. In addition we can require that $\eta < \frac{\epsilon}{12|F_{n_0}|}$, and also that $(2\delta+2\eta)\cdot\max\set{\log s, \log l}<\eps_k$. Once $\eta$ is fixed, we know that every shape of every $\CT_y$ is a $(1-\eta)$-subset of one of finitely many F\o lner sets which we can enumerate as $F_{k_1}, F_{k_2},\ldots, F_{k_{N_{\CT}}}$. Note that $N_{\CT}$ depends only on $\eta$, and all the $k_i$'s can be assumed to be arbitrarily large.
\subsubsection*{Step 4} Similarly to the case of $\mathbb{Z}$-actions, we want to encode $\CT_y$ within specially constructed elements $\overline{x}$ of $X$, so that the content of any such $\overline{x}$ allows us to reconstruct $\CT_y$. In the case of $\mathbb{Z}$ actions it is sufficient to ensure that a specific marker block (in \cite{BKS} it is a long block consisting of $1$'s) occurs in $\overline{x}$ with controlled gaps, leaving a large amount of freedom in specifying the content of $\overline{x}$ between the occurrences of the marker block (the only requirement is that we do not introduce an extra occurrence of a marker block, but that still leaves enough freedom to construct measures with all necessary properties). Our method of encoding follows a similar intuition (we will place marker blocks at centers of tiles of $\CT_y$, leaving much freedom in filling the remainder of the tiles), but since $G$ does not have a natural ordering and is not necessarily Abelian, the construction becomes more challenging. 

Since the disjoint quasitiling $\CT_y$ is a factor of a (not necessarily disjoint) quasitiling by $N_{\CT}$ F\o lner sets (where $N_{\CT}$ depends only on the constant $\eta$ determined in the previous step), we will need exactly $N_{\CT}$ marker blocks: every tile $T$ of every $\CT_y$ has a unique representation $T=Sc$, where $S$ is a $(1-\eta)$-subset of $F_{k_i}$ for a uniquely determined $i$. Thus, to encode $\CT_y$ within some $\overline{x}\in X$ it suffices to ensure that $\overline{x}$ has the $i$'th marker block at every such $c$, and no marker block occurs anywhere else in $\overline{x}$.

The domain $D$ of each marker block $M_i$ will consist of some finite set $D_0$ (chosen such that the marker will have a small probability of occurring ``randomly'', and such that certain overlaps of the domain with itself are not possible), and additional coordinates $g_1,g_2,\ldots,g_{N_{\CT}}$ (which will be used to distinguish between the variants of the marker block). The marker block $M_i$ itself will have the symbols $1$ on all coordinates in $D_0$ and at $g_i$, and it will have the symbol $2$ at all coordinates $g_j$ for $j\neq i$. Then, to ensure that $\overline{x}(Dg)=M_i$ if and only if $\CT_y$ includes a tile $Sg$ associated with the F\o lner set $F_{k_i}$, we will simply change at least one $1$ to a $2$ within every domain of the form $Dg'$ for other $g'$. If the total measure of the blocks $M_i$ is small enough (and it can be arbitrarily small), the number of necessary changes will be small enough to leave sufficiently many blocks available to obtain other desired properties. However, \emph{a priori} we also need to consider the subtle challenge arising from the possibility that $Dg=Dg'$ for some $g'\neq g$ (and then we could not change $\overline{x}(Dg')$ since that would destroy the necessary marker block). Fortunately, we can avoid this by putting some additional care into constructing the set $D$, so that the equality $Dg=Dg'$ is never possible for $g\neq g'$. The precise reasoning and construction is presented in the following lemma:
% \bigskip\hrule\bigskip
% \textbf{Dawid:} Przemyśleć z tą stałą (może $\delta_M$) i przeredagować żeby było jaśniej, o co chodzi z $D_0$
% \bigskip\hrule\bigskip
\begin{lem}
\label{lem:marker}
    For each $N \in \mathbb{N}$ and $\delta_M>0$ there exists a finite set $D$ and blocks $M_1,M_2,\ldots M_N$ with  domain $D$ such that:
    \begin{enumerate}
        \item The total measure of the cylinders corresponding to these blocks is at most $\delta_M$;
        \item If $x(D)=M_i$ and the symbol $1$ occurs nowhere in $x(D^{-1}D \setminus D)$, then $x(D)$ is the only occurrence of any $M_i$'s in $x(D \cup D^{-1}D)$.
    \end{enumerate}
\end{lem}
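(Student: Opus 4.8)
The plan is to take the domain to be a disjoint union $D = D_0 \sqcup \set{g_1,\dots,g_N}$, where $D_0$ is a carefully chosen finite set with $e\in D_0$ and $g_1,\dots,g_N$ are $N$ distinct auxiliary coordinates outside $D_0$. I define $M_i$ to carry the symbol $1$ on $D_0\cup\set{g_i}$ and the symbol $2$ on every $g_j$ with $j\neq i$, so that all $N$ markers agree on $D_0$ and are distinguished only by which auxiliary coordinate carries a $1$; in particular $M_i\neq M_j$ for $i\neq j$, since they differ at $g_i$. Property (1) is then immediate: under the uniform Bernoulli measure on $\set{1,\dots,s}^G$ each $M_i$ determines a cylinder of measure $s^{-\abs{D}}$, so the total measure is $Ns^{-\abs{D}}\leq N2^{-\abs{D_0}}$, which drops below $\delta_M$ as soon as $\abs{D_0}$ is large — and the construction below permits $\abs{D_0}$ to be arbitrarily large. (The only feature of the measure being used is that a long prescribed block is rare.)

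For property (2) I would reduce everything to a single self-overlap condition on $D_0$, namely
\[
    \abs{D_0\cap D_0 g}\leq \abs{D_0}-2\qquad\text{for every }g\neq e,
\]
equivalently $\abs{D_0\setminus D_0 g}\geq 2$. Granting this, the argument is a short count. Since $e\in D_0\subseteq D$ we have $D\subseteq D^{-1}D$, hence $D\cup D^{-1}D=D^{-1}D$, and the two hypotheses of (2) pin down all occurrences of the symbol $1$ inside $D^{-1}D$: on $D$ they are exactly $D_0\cup\set{g_i}$ (because $x(D)=M_i$), and on $D^{-1}D\setminus D$ there are none. Now suppose some $M_j$ occurred at a position $g\neq e$ with $Dg\subseteq D^{-1}D$. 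Reading off the $1$-coordinates of that occurrence gives $D_0 g\cup\set{g_jg}\subseteq D_0\cup\set{g_i}$, so in particular $D_0 g\subseteq D_0\cup\set{g_i}$; since $\abs{D_0 g}=\abs{D_0}$ and $D_0 g\setminus D_0\subseteq\set{g_i}$, this forces $\abs{D_0\cap D_0 g}\geq\abs{D_0}-1$, contradicting the overlap condition. Thus the occurrence of $M_i$ at $e$ is the only occurrence of any marker inside $x(D\cup D^{-1}D)$, which is precisely (2).

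It remains to construct, for arbitrarily large $n$, a set $D_0\ni e$ of size $n$ with $\abs{D_0\cap D_0 g}\leq\abs{D_0}-2$ for all $g\neq e$; it suffices to keep every self-overlap at most $2$ and take $n\geq 4$. I would build $D_0$ greedily, starting from $\set{e}$ and adjoining one element at a time. Adjoining $y$ to a current set $\set{d_1,\dots,d_m}$ creates only the $2m$ new quotients $d_i^{-1}y$ and $y^{-1}d_i$, so any $g\neq e$ can acquire at most one new representation of each of these two kinds; hence its overlap can reach $3$ only if it already had a representation and simultaneously gains both a left and a right new one — which pins $y$ to one of the finitely many values $d_i d_p^{-1}d_q$ — or if it already had two representations and gains one, which again pins $y$ to finitely many values. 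Avoiding this finite forbidden set (together with $y\in\set{d_1,\dots,d_m}$) keeps all overlaps $\leq 2$, and as $G$ is infinite a legal $y$ always exists. This inductive step is the part I expect to demand the most care: it is exactly here that non-commutativity and possible torsion in $G$ intervene — one cannot in general drive the overlaps down to $1$ (in a group of exponent $2$ the symmetric pairs already force overlap $2$), and the potentially troublesome ``quadratic'' coincidences $d_i^{-1}y=y^{-1}d_j$ could have infinitely many solutions $y$; the key observation that rescues the argument is that such a coincidence raises an overlap to $3$ only when it collides with an \emph{already existing} representation, and that collision constrains $y$ to a finite set.
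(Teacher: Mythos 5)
Your reduction of property (2) to the single overlap condition $\abs{D_0\cap D_0g}\leq\abs{D_0}-2$ for all $g\neq e$ is correct, and it is a genuinely different route from the paper's: the paper keeps $D_0$ essentially arbitrary (any large set of prime cardinality containing $e$ and an element $g_0$ with $\mathrm{ord}(g_0)\neq\abs{D_0}$) and instead runs a case analysis on where a spurious marker could sit, using $g_i\notin D_0^2$ and $D_0g_i\cap D_0=\emptyset$ to reduce the dangerous case to the equation $D_0g=D_0$, which the primality/order trick then excludes. Your Sidon-type condition disposes of all positions $g\neq e$ uniformly, and your greedy construction of $D_0$ --- including the key observation that the quadratic coincidences $d_i^{-1}y=y^{-1}d_j$ are harmless unless they collide with an already existing quotient, which confines $y$ to a finite set --- is sound. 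As combinatorics this part is fine and arguably cleaner than the paper's.

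The genuine gap is in property (1). The measure in the lemma is not the uniform Bernoulli measure: in the application the markers must be rare with respect to $\xi^X$, the $X$-marginal of an arbitrary ergodic joining, and for such a measure it is not true in general that ``a long prescribed block is rare'' when the domain of that block is sparse. Your overlap condition forces $D_0$ to be a Sidon-type, hence very spread-out, set, and there are ergodic measures of positive entropy for which the all-$1$'s cylinder over every finite subset of some fixed sparse set has measure bounded away from $0$ (on $\Z$, take a measure that deterministically places $1$'s on an arithmetic progression and is random elsewhere; a greedily built Sidon set may land inside that progression). The paper sidesteps this because its conditions on $D_0$ are compatible with taking $D_0$ to contain a large F\o lner set, over which the ergodic theorem makes the all-$1$'s block rare. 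Your argument is repairable: at each greedy step only finitely many $y$ are forbidden, and an averaging argument via the mean ergodic theorem shows that among the allowed $y$ there is always one with $\mu\bigl(\bigl[1^{D_0\cup\{y\}}\bigr]\bigr)\leq(1-c)\,\mu\bigl(\bigl[1^{D_0}\bigr]\bigr)$, where $c=\tfrac{1}{2}\bigl(1-\mu[x(e)=1]\bigr)>0$, so the marker measure can be driven below $\delta_M$ while preserving the overlap bound. But as written, the measure estimate is the one step that would fail.
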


\begin{proof}
    Let $D = D_0 \cup \{g_1,g_2,\ldots, g_N\}$ where:
    \begin{itemize}
        \item the measure of the block with domain $D_0$ which has the symbol $1$ at all coordinates is at most $\delta_M$,
        \item $|D_0|$ is a prime number,
        \item $e\in D_0$ and $g_0\in D_0$ such that $\text{ord}(g_0) \neq |D_0|$ (the order of the element $g_0$),
        \item for $i=1,2,\ldots,N$, we have $g_i \notin D_0^2$ , and also $D_0g_i \cap D_0 = \emptyset$. 
    \end{itemize}
    
    Define $M_i$ as a block having the symbol $1$ at coordinates from $D_0$ and at $g_i$, and the symbol $2$ at all the other coordinates in $D$. The total measure of such blocks is less than the measure of the block having $1$'s on the coordinates from $D_0$, and so less than $\delta_M$. 
    
    Now suppose that $x(D) = M_i$, and $x$ does not have the symbol $1$ at any other coordinate from $D\cup D^{-1}D$. It is clear that if $g \in D^{-1}D \setminus D$ or $g = g_j$ for $j\neq i$, then $x(Dg)$ cannot be equal to any $M_i$, because the  center symbol of any such block is not $1$. For $g=g_i$ this is also impossible, because $x(Dg_i)$ has only one symbol $1$. Now suppose there is a $g \in D_0 \setminus e$ such that $x(D_0g)$ consists only of $1$'s. We know that $g_i \notin D_0g$ which means that $D_0g = D_0$ and $\text{ord}(g) = |D_0|$. Since $D_0$ is finite, $|D_0|$ is prime and $e\in D_0$, we can write 
    \[
        D_0 = \{g^j: j=0,1,\ldots |D_0|-1 \}
    \]
    and so $g_0 = g^j$ for some $j$. This is a contradiction, because it implies $\text{ord}(g_0) = |D_0|$, and we assumed otherwise.
\end{proof}
\subsubsection*{Step 5}
We will create a new joining $\overline{\xi}$ as the image of $\xi$ under a map which codes every $(x,y)$ to a point $(\overline{x},\overline{y})$ in the following way: $\overline{y}=y$, and $\overline{x}$ is constructed based on $y$, using a coding procedure which is ,,nearly'' invertible (thus $\overline{\xi}$ will be ,,nearly'' an isomorphism joining). To this end, we need to establish injective block codes (dictionaries), between blocks in $Y$ and in $X$, one such dictionary for each shape of the quasitiling $\CT_y$. The existence of such dictionaries will follow from the following variant of the marriage lemma: if $\mathbf{B}$ and $\mathbf{A}$ are finite sets, $\sim$ is a relation between $\mathbf{B}$ and $\mathbf{A}$, and there exists a $K>0$ such that every $B\in \mathbf{B}$ is in relation with at least $K$ elements of $\mathbf{A}$, and also every $A\in \mathbf{A}$ is in relation with at most $K$ elements of $\mathbf{B}$, then there exists an injective mapping $f:\mathbf{B}\to\mathbf{A}$.

Let $S$ be a shape of the quasitiling $\CT_y$ (and thus a $(1-\eta)$-subset of $F_{k_i}$ for some $i\in \set{1,2,\ldots, N_{\CT}}$). Let $\mathbf{B}$ and $\mathbf{A}'$ be defined as in step 3 above. For each $A'\in \mathbf{A}'$ we define $A = \Psi(A')$ by changing some symbols in $A'$, in order to make sure that $A$ has exactly one marker subblock:
\begin{itemize}
    \item Based on lemma \ref{lem:marker} there exists a domain $D$ and a collection of marker blocks $M_1,\ldots,M_{N_{\CT}}$ with domain $D$, so that the total measure of these blocks is arbitrarily small. Let $\delta_M$ denote the upper estimate of this total measure of marker blocks. We will put specific constraints on this value later, but here we note that it can be made arbitrarily small with respect to $\eta$ and $\delta$ (since these two values can be specified before $\delta_M$).
    \item Replace symbols at coordinates from the set $D$ from lemma \ref{lem:marker} (the domain of the marker blocks) with the marker block $M_i$.
    \item For any other occurrence of $M_1,M_2,\ldots, M_{N_{\CT}}$ within $A'$, replace one instance of the symbol $1$ with $2$ (thus eliminating all other occurrences of marker blocks within $A'$).
    \item Replace all $1$'s with $2$'s at coordinates from the set $\{g\in S: Dg \nsubseteq S\}$ (this set can be assumed to have arbitrarily small cardinality relative to $S$).
\end{itemize}
The mapping $\Psi$ is clearly not injective. However, for any $A$ the cardinality of the set $\{ A' \in \mathbf{A}': \Psi(A') = A\}$ is at most
\[
    s^{|D|} + \sum_{i = 1}^j \binom{|S|}{i}2^i,
\]
where $j$ is the maximal number of occurrences of the blocks $M_1,M_2,\ldots,M_{N_{\CT}}$ in the various $A'$'s. 
%% Dobierzemy wielkość S tak, żeby j nie przekraczało czego trzeba.
Since all the measures in question are ergodic, and $\abs{S}$ can be assumed to be arbitrarily large notwithstanding any other conditions, we can assume that the number $|\frac{j}{|S|}-\delta_M|$ is arbitrarily small and so $\frac{j}{|S|}$ is arbitrarily small compared to $\delta$. (Note that to define the marker blocks, we only need to specify $N_{\CT}$ and $\delta_M$, which in turn depend only on $\eta$ and $\delta$, and thus we can actually choose them before we fix the minimal possible size of $S$. Thus we can assume that the size of $S$ is so large that all the blocks with domain $S$, in which the marker blocks occur too often, have negligible total measure and can be excluded from $A'$ already in step 3). In addition we can assume $|D| < \frac{\delta}{\log s}|S|$. Ultimately, we can choose $\delta_M$ and then $\abs{S}$ so that
\[
    s^{|D|} = 2^{|D|\log s} \leq 2^{\delta|S|},
\]
and also
\[ \frac{2j}{|S|}+ \frac{j}{|S|}\log\left(\frac{3|S|}{j}\right) \leq \delta.\]
For such $A'$ we have the following estimate (also using the inequality $\binom{n}{k}\leq \left(\frac{ne}{k}\right)^k\leq\left(\frac{3n}{k}\right)^k$):
\[
    \sum_{i = 1}^j \binom{|S|}{i}2^i \leq j\binom{|S|}{j}2^j \leq j\Big(\frac{3|S|}{j}\Big)^j 2^j = 2^{j+\log j + j\log(\frac{3|S|}{j})} \leq 2^{|S|(\frac{2j}{|S|}+ \frac{j}{|S|}\log(\frac{3|S|}{j})}) \leq 2^{\delta|S|}.
\]
Together (for large enough $S$) we get the inequality
\[
    s^{|D|} + \sum_{i = 1}^j \binom{|S|}{i}2^i \leq 2^{2\delta|S|}.
\]
Let $\mathbf{A} = \{\Psi(A'): A' \in \mathbf{A}'\}$ (note that every block from $\mathbf{A}$ has exactly one marker subblock). We are ready to define a relation $\sim$  as follows: $A \in \mathbf{A}$ and $B\in \mathbf{B}$ are \emph{related} if there exists an $A' \in \mathbf{A}'$ with $\Psi(A')=A$ such that the pair $(A',B)$ considered as an atom of the partition 
\[
   \bigvee_{g\in S}g(P\vee Q),
\]
satisfies both conditions specified in the third item of step 3. Recall the relevant estimates from that set of inequalities:
\begin{enumerate}
    \item \[
            \xi^Y(B) \geq 2^{-|S|(h_{\xi^Y} + \delta)}
        \]
        
    \item \[
            \xi^X(A') \leq 2^{-|S|(h_{\xi^X} - \delta)}
        \]
    \item \[
            2^{-|S|(h+\delta)} \leq \xi(A'\times B) \leq 2^{-|S|(h-\delta)}.
        \]
\end{enumerate}

Since $B$ is the union of disjoint sets of the form $A'\times B$, we can combine the inequalities $(3)$ and $(1)$ (and the fact that the mapping $A'\mapsto A$ is at most $2^{2\delta|S|}$-to-one), to conclude that every $B\in \mathbf{B}$ is in relation with at least (recall that in step 3 we chose $\delta<\frac{d}{12}$)
\[
    2^{|S|(h-h_{\xi^{Y}}-2\delta-2\delta)} \geq  2^{|S|(h-h_{\xi^{Y}}-\frac{d}{3})} =: K
\]
elements of $\mathbf{A}$. Similarly, inequalities $(2)$ and $(3)$ together imply that every $A\in \mathbf{A}$ is in relation with at most
\[
    2^{|S|(h-h_{\xi^{X}}+2\delta)} \leq 2^{|S|(h-h_{\xi^{X}}+\frac{d}{3})} < K
\]
elements of $\mathbf{B}$ (recall that $d$ was defined in step 3 as $h_{\xi^Y}-h_{\xi^X}$). Therefore by the marriage lemma we can match each element of $\mathbf{B}$ (i.e. a block from $Y$) to an element of $\mathbf{A}$ (i.e. a modified version of a block from $X$) in an injective fashion. We denote the family of such functions as $\Phi_S$, where $S$ is the domain of our blocks. In addition, for the blocks $B\notin \mathbf{B}$, set $\Phi_S(B)=A$, where $A$ is chosen arbitrarily from the range of $\Phi_S$.

We are now ready to define the mapping that will transport $\xi$ to a joining $\overline{\xi}$, which will hold all required properties, possibly except for the inequality $h_{\overline{\xi}^X} \geq h_{\overline{\xi}^Y}$. To do so, we modify a point $(x,y)$ to a point $(\overline{x},\overline{y})$ as follows:
\begin{enumerate}
    \item $\overline{y}=y$,
    \item for each tile $T$ of $\CT_y$ let $S$ be the shape of $T$ and let $B$ be the block occurring in $y$ at coordinates $T$, then set $\overline{x}(T)=\Phi_S(B)$,
    \item for $g\in G$ which do not belong to any tile of $\CT_y$, we set $\overline{x}(g)=2$.
\end{enumerate}

We need to establish the following:
\begin{enumerate}
    \item $|\xi(A)-\overline{\xi}(A)| < \frac{\epsilon}{2}$ for any atom $A$ of $\bigvee_{g\in F_{n_0}}g(P\vee Q)$,
    \item $H_{\overline{\xi}}(P|\CQ) < \epsilon_k$,
    \item $H_{\overline{\xi}}(Q|\CP) < \epsilon_k$.
\end{enumerate}

The first inequality follows directly from corollary \ref{cor:measure_approximation}, since every $(\overline{x},\overline{y})$ was constructed by placing blocks from $\mathbf{A'}\times\mathbf{B}$ within all tiles of $\CT_y$, and every block from $\mathbf{A'}\times\mathbf{B}$ approximates $\xi$ within $\eta$. Thus any invariant measure supported by the orbit closure of any $(\overline{x},\overline{y})$ is within $\frac{\eps}{2}$ of $\xi$.

% To estimate the $|\xi(A)-\overline{\xi}(A)|$ we use this formula
% \[
%     \lim_{n\to\infty} \frac{1}{|F_n|}\sum_{g\in F_n} \delta_A (gx) = \mu([A]).
% \]
% We have to show, that the number of changed blocks is smaller then $|F_n|\frac{\epsilon}{3}$. \bigskip\hrule\bigskip
% \textbf{Dawid:} Tu chyba inaczej powinno być? Mapowanie $x\mapsto \overline{x}$ bardzo zmienia większość współrzędnych (całą zawartość tajla zastępujemy czymś innym), ale argumentowaliśmy tak, że ponieważ wstawiamy w tych tajlach bloki pochodzące z $x$, to częstości występowania \emph{mniejszych} bloków nie zmienią się za bardzo.\\
% \textbf{Jacek:} zgadzam się z Dawidem
% \bigskip\hrule\bigskip Because we do not change $y$, we just need to check what changed between $x$ and $\overline{x}$. The number of the coordinates which has different symbols is at most 
% \[
%     |F_n|(3\delta + 2\eta)
% \]
% where first part comes from quasitiling and the second from the rest. Each of this point could change $|F_{n_0}|$ blocks, so we get number
% \[
%     |F_n||F_{n_0}|(3\delta + 2\eta) < |F_n|\frac{\epsilon}{3}
% \]
% and so if $n$ is so large that 
% \begin{align*}
%     \Big|\frac{1}{|F_n|}\sum_{g\in F_n} \delta_A (gx) - \xi(A)\Big| &< \frac{\epsilon}{3} \\
%     \Big|\frac{1}{|F_n|}\sum_{g\in F_n} \delta_A (gx) - \overline{\xi}(A)\Big| &< \frac{\epsilon}{3}
% \end{align*}
% then $|\xi(A)-\overline{\xi}(A)| < \epsilon$.
% \bigskip\hrule\bigskip
% \textbf{Dawid:} Tu powinno być $\epsilon/2$
% \bigskip\hrule\bigskip
To estimate $H_{\overline{\xi}}(P|\CQ)$ we need to answer the following question: When is it possible, for some $(\overline{x}, \overline{y})$, to determine $\overline{x}(e)$, knowing the symbols in $\overline{y}(F_n)$ for large $n$? There are only two cases when this may \emph{not} be possible. The first such situation is when $e$ does not belong to any tile of $\CT_{\overline{y}}$ (which is the same as $\CT_{y}$); this happens with probability less than $2\eta$. The second possibility is that $e$ does belong to some tile with shape $S$, but the corresponding block in $\overline{y}$ is not in the domain of the associated mapping $\Phi_S$. The probability of such a situation is less than $2\delta$. Therefore, the only atoms of $\bigvee_{g\in F_n}gQ$ with nonzero contribution to conditional entropy have total measure less than $2\delta+2\eta$. In addition, we chose $\delta$ and $\eta$ so small that $(2\delta+2\eta)\log s <\eps_k$, whence 
\[
    H_{\overline{\xi}}(P|\CQ) \leq H_{\overline{\xi}}(P|\bigvee_{g\in F_n}gQ) \leq
    (2\delta + 2\eta)\log s < \epsilon_k.
\]

The reasoning for  $H_{\overline{\xi}}(Q|\CP)$ is very similar. Recall that in the pair $(\overline{x}, \overline{y})$ the point $\overline{x}$ was constructed so that for every tile $T$ of $\CT_y$ the block $\overline{x}(T)$ has a marker subblock, and these marker subblocks allow us to recreate $\CT_y$ (at least locally) based on $\overline{x}(F_n)$ for large enough $n$. Thus, for large enough $n$, we can determine which (if any) tile of $\CT_{y}$ includes the neutral coordinate $e$, and we can also determine the shape $S$ of such a tile. Since $\Phi_S$ is injective, the only cases when we cannot determine $\overline{y}(e)$ are when $e$ does not belong to any tile (which has probability less than $2\eta$) or $\overline{x}(S)$ is not in the range of $\Phi_S$ (which has probability less than $2\delta$). Thus we get
\[
    H_{\overline{\xi}}(Q|\CP) \leq H_{\overline{\xi}}(Q|\bigvee_{g\in F_n}gP) \leq
    (2\delta + 2\eta)\log l < \epsilon_k.
\]
As stated earlier, while $\overline{\xi}$ obviously has $\nu$ as the marginal on $Y$, the entropy of $\overline{\xi}^X$ may be less than $h_{\nu}$ (which would disqualify $\overline{\xi}$ from $V_{k,l}$). However, we can estimate this potential deficit of entropy. Since
\[ h_{\nu}=h_{\overline{\xi}}(\CQ)= h_{\overline{\xi}}(\CQ|\CP)+h_{\overline{\xi}}(\CP)\leq H_{\overline{\xi}}(Q|\CP)+h(\overline{\xi}^X), \]
we have 
\[ h({\overline{\xi}^X})\geq h_{\nu}-H_{\overline{\xi}}(Q|\CP)\geq h_{\nu}-(2\delta + 2\eta)\log l.\]
Thus, if $(2\delta+2\eta)\log l < \frac{\eps}{2}\cdot(\log s-h_{\nu})$ (which we can safely assume since $s$, $\nu$ and $l$ are known from the start, and $\eps$ is chosen before $\delta$ and $\eta$), lemma \ref{lem:higher entropy} allows us to find a measure $\widetilde{\xi}\in \CM_0^*$ such that $d(\widetilde{\xi},\overline{\xi})<\frac{\eps}{2}$ and $h(\widetilde{\xi}^X)\geq h(\overline{\xi}^X)+\frac{\eps}{2}(\log s - h_{\overline{\xi}^X}(X))\geq h_{\nu}$. Since $V_{k,l}$ is open, for sufficiently small $\eps$ we have a guarantee that such a $\widetilde{\xi}$ will be in $V_{k,l}$.

\subsection*{Proof of the homomorphism theorem}
The arguments in this short section are virtually unchanged from those in \cite{BKS} (since all the necessary tools for amenable groups already exist), however, we include the proof for completeness. For a detailed discussion of finite determinedness, proximity in finite distributions, and connections with the existence of relatively independent joinings, we refer the reader to \cite{OW} and/or \cite{G}; here we will just briefly recall the main notions in our specific context.

Let $(Y,\CB,\nu,G)$ be a system with positive, finite entropy $h_\nu$, and let $(X,\CA,\mu,G)$ be a Bernoulli system also of entropy $h_\nu$. Let $P_X=\{P_1,\ldots,P_s\}$ denote the generating partition of $X$ into cylinders, and let $P=\{P_1\times Y,P_2\times Y,\ldots, P_s\times Y\}$ be the corresponding partition in the product space. Recall that 
\[
    \CM_1 = \{ \xi\in \CM : \xi \text{ is invariant and ergodic, } \xi^Y=\nu,\,\xi^X = \mu\},
\]
and
    \[
        \CM_1^{*} = \{\xi \in \CM_1: \xi \text{ is a homomorphism}\},
    \]
    and our goal is to show that $\CM_1^*$ is dense in $\CM_1$.
Let 
\[
\CM_{1,n} = \set{\xi\in\CM_1 : P\subset_{\frac{1}{n}}\CB \,\mod \xi}.
\]
It is easy to see that these sets are open, that $\CM_1=\bigcap_{n}\CM_{1,n}$ (since $P_X$ is a generating partition), and it remains to prove that they are dense in $\CM_1$. Thus, for some fixed $\xi \in \CM_1$, $\eps>0$ and $n\geq 1$, we need to find a $\widetilde{\xi}\in\CM_{1,n}$ such that $d(\widetilde{\xi},\xi)<\eps$. Again, we will construct such a $\widetilde{\xi}$ in stages:
\subsubsection*{Step 1}
Before we proceed, recall the following facts about the $\overline{d}$ distance between any two ergodic measures $\mu$ and $\mu_1$ on $X$:
\begin{itemize}
    \item Following \cite{G}, we will define the metric $\overline{d}$ as
    \[ \overline{d}(\mu,\mu_1)=\inf\set{\sum_{A\in P}\lambda\left((A\times X) \sd (X\times A)\right)}, \]
    where $\lambda$ ranges over the joinings between $\mu$ and $\mu_1$. This infimum is in fact a minimum and is always attained by an ergodic joining. The topology induced by $\overline{d}$ is in general stronger than the weak-$*$ topology.
    \item If $\mu$ is a Bernoulli measure, then it is finitely determined. For our purposes this means that for every $\eps>0$ there exists a $\delta>0$ such that if $\mu_1$ is another measure on $X$, such that $|h(\mu_1)-h(\mu)|<\delta$, and $\mu_1$ is $\delta$-close to $\mu$ in the weak-$*$ metric, then $\overline{d}(\mu,\mu_1)<\eps$. 
\end{itemize}
\subsubsection*{Step 2} Since $\CM_1$ is a subset of $\CM_0$, theorem \ref{thm:krieger_joinings} implies that we can find a joining $\xi_1\in\CM_0^*$ arbitrarily close to $\xi$ (in particular, closer than $\frac{\eps}{2}$, although this is not the only proximity condition that we require). Any such $\xi_1$ will have the following properties:
\begin{itemize}
    \item $\xi_1^Y=\nu$,
    \item $\xi_1$ is an isomorphism between $(Y,\CB,\nu)$ and $(X,\CA,\xi_1^X)$,
    \item $h(\xi_1^X)=h(\xi_1^Y)=h_\nu$.
\end{itemize}
\subsubsection*{Step 3} Sufficient proximity of $\xi_1$ to $\xi$ in the weak-$*$ metric on $\CM$ implies proximity of $\xi_1^X$ to $\xi^X$ (which is equal to $\mu$) in the weak-$*$ metric on $X$. Since $(X,\CA,\mu,G)$ is Bernoulli, and thus finitely determined, such proximity (combined with the equality of entropies) means that $\overline{d}(\xi^X_1,\mu)$ can also be made arbitrarily small. Denote $\xi_1^X$ as $\mu_1$. 
\subsubsection*{Step 4} Sufficient proximity between $\mu_1$ and $\mu$ in $\overline{d}$ implies that there exists an ergodic joining $\rho$ on $X\times X$, projecting onto $\mu$ on the first coordinate, and $\mu_1$ on the second, such that the quantity
\[ \rho\left( \bigcup_{A\in \bigvee_{g\in F_N} gP_X} A\times A \right) \]
can be assumed to be greater than $1-\eps_1$ for arbitrarily small $\eps_1$ and arbitrarily large $N$. In particular this means that for every $A\in \bigvee_{g\in F_N} gP_X$ we have $\mu_1(A)-\rho(A\times A)<\eps_1$.
\subsubsection*{Step 5} The systems $(X\times X,\rho)$ and $(X\times Y, \xi_1)$ have $(X,\mu_1)$ as a common factor, and thus they have a relatively independent joining over $(X,\mu_1)$, which can be represented as a measure $\zeta$ on $X\times X\times Y$ (with marginals $\mu$, $\mu_1$ and $\nu$, respectively). Let $\widetilde{\xi}$ be the projection of $\zeta$ onto the outer coordinates, i.e. a measure on $X \times Y$ such that $\widetilde{\xi}(A\times B)=\zeta(A\times X\times B)$. Let $\rho_x$ and $(\xi_1)_x$ denote the disintegrations over $(X,\mu_1)$ of $\rho$ and $\xi_1$, respectively. For any $A\in\bigvee_{g\in F_N} gP_X$, $B\in\CB$, note the following:
\[ \xi_1(A\times B) = \int_A(\xi_1)_x(B)d\mu_1(x) =\int_X \chi_A(x)\cdot(\xi_1)_x(B)d\mu_1(x), \]
\[ \widetilde{\xi}(A\times B) = \zeta(A\times X\times B)= \int_X \rho_x(A)\cdot(\xi_1)_x(B)d\mu_1(x).\]
Therefore 
\[\begin{split}
 \abs{\xi_1(A\times B)-\widetilde{\xi}(A\times B) } &= \abs{\int_X \chi_A(x)\cdot(\xi_1)_x(B)d\mu_1(x)-\int_X \rho_x(\widetilde{A})\cdot(\xi_1)_x(B)d\mu_1(x)} \\
 &\leq \int_X\abs{\chi_A(x)\cdot(\xi_1)_x(B)-\rho_x(A)\cdot(\xi_1)_x(B)}d\mu_1(x) \\
 &\leq \int_A\abs{\chi_A(x)\cdot(\xi_1)_x(B)-\rho_x(A)\cdot(\xi_1)_x(B)}d\mu_1(x) \\
 &= \int_A (1-\rho_x(A))\cdot(\xi_1)_x(B)d\mu_1(x)\leq \int_A (1-\rho_x(A))d\mu_1(x) \\
 &=\mu_1(A)-\rho(A\times A)<\eps_1.
\end{split}\]
Since $\eps_1$ can be assumed to be arbitrarily small, and $N$ arbitrarily large (note that all that is required for all our conditions to hold is that the $\xi_1$ we choose at the start is sufficiently close to $\xi$), this means that we can have $d(\widetilde{\xi},\xi_1)<\frac{\eps}{2}$, and thus $d(\widetilde{\xi},\xi)<\eps$.

In addition, since $P\subset \CB\, \mod\xi_1$ (in fact, they are equal$\mod\xi_1$), choosing $\eps_1$ to be sufficiently small will guarantee that $P\subset_{\frac{1}{n}} \CB\,\mod\widetilde{\xi}$. Also $\widetilde{\xi}$ has $\mu$ as the marginal on $X$ (immediately from definition), and is ergodic (even though the relatively independent joining $\zeta$ need not itself be ergodic, almost all of its ergodic components factor onto $\widetilde{\xi}$, which implies ergodicity of the latter), and thus $\widetilde{\xi}\in\CM_{1,n}$.\qed

%
%
% \bibliography{GroupExtensions}

\begin{thebibliography}{00}
\bibitem{B1} L. Bowen, \emph{Measure conjugacy invariants for actions of countable sofic groups}, Journal of the American Mathematical Society, \textbf{23}, (2010), 217-245. 
\bibitem{B2} L. Bowen, \emph{Every countably infinite group is almost Ornstein}, Contemporary Mathematics, \textbf{567}, (2012), 67-78.
\bibitem{BKS} R. Burton, M. Keane, J. Serafin, \emph{Residuality of dynamical morphisms}, Colloquium Mathematicum, \textbf{84}, (2000), 307-317.
\bibitem{BR} R. Burton, A. Rothstein, \emph{Isomorphism theorems in ergodic theory}, Technical
Report, Oregon State University, (1977)
\bibitem{CJKMST} C. T. Conley, S. C. Jackson, D. Kerr, A. S. Marks, B. Seward, R. D. Tucker-Drob, \emph{F{\o}lner tilings for actions of amenable groups}, Mathematische Annalen, vol. 371, no. 1-2 (2018), 663-683.
\bibitem{DHZ} T. Downarowicz, D. Huczek, G. Zhang. \emph{Tilings of amenable groups}, Journal f\"ur die reine und angewandte Mathematik (Crelles Journal), vol. 2019, no. \textbf{747} (2019), 277-298. 
\bibitem{DS} T. Downarowicz, J. Serafin, \emph{A short proof of the Ornstein theorem}, Ergodic Theory and Dynamical Systems, \textbf{32(02)}, (2012), 585-597.
\bibitem{G} E. Glasner, \emph{Ergodic theory via joinings}, Mathematical Surveys and Monographs, vol. 101 (2003),  American Mathematical Society.
\bibitem{KL1} D. Kerr, H. Li, \emph{Entropy and the variational principle for actions of sofic groups}, Inventiones Mathematicae, \textbf{186}, (2011), 501-558.
\bibitem{KL} D. Kerr, H. Li, \emph{Ergodic Theory: Independence and Dichotomies}, Springer Monographs in Mathematics, (2016).
\bibitem{Ki} J. C. Kieffer, \emph{An entropy equidistribution property for a measurable partition under the action of an amenable group},  Bulletin of the American Mathematical Society, \textbf{81(2)} (1975), 464-466.
\bibitem{Kr} W. Krieger, \emph{On entropy and generators of measure-preserving transformations}, Transactions of the American Mathematical Society, \textbf{149} (1970), 453–464.
\bibitem{L} E. Lindenstrauss, \emph{Pointwise theorems for amenable groups}, Inventiones Mathematicae
\textbf{146} (2001), 259-295.
\bibitem{O} D. Ornstein, \emph{Bernoulli shifts with the same entropy are isomorphic}, Advances in Mathematics,
\textbf{4} (1970), 337-352.
\bibitem{OW} D. Ornstein, B.Weiss, \emph{Entropy and isomorphism theorems for actions of amenable groups}, Journal d'Analyse Mathématique, \textbf{48} (1987), 1-141.
\bibitem{R} D. Rudolph, \emph{Fundamentals of Measurable Dynamics: Ergodic Theory on Lebesgue Spaces}, Oxford University Press, (1990). 
\bibitem{Se} J. Serafin, \emph{Finitary codes and isomorphisms}, Ph.D. Thesis, Technische Universiteit
Delft, (1996).
\bibitem{Si} Ya. Sinai, \emph{A weak isomorphism of transformations with an invariant measure},
Dokl. Akad. Nauk SSSR \textbf{147} (1962), 797-800 (in Russian).
\end{thebibliography}
% \bibliographystyle{amsalpha}
%

\end{document}